\def\RHom{\operatorname{RHom}}
\def\Hom{\operatorname{Hom}}
\def\op{\operatorname{op}}
\def\End{\operatorname{End}} 
\def\Ext{\operatorname{Ext}}
\def\EssIm{\operatorname{Ess.\!Im}}
\def\Mod{\operatorname{Mod}}
\def\H{\operatorname{H}}
\def\Ab{\operatorname{Ab}}
\newtheorem{Lemma}{Lemma}[section]
\newtheorem{Theorem}[Lemma]{Theorem}
\newtheorem*{Theorem*}{Theorem}
\newtheorem{Proposition}[Lemma]{Proposition}
\newtheorem{Corollary}[Lemma]{Corollary}
\newtheorem*{Corollary*}{Corollary}
\theoremstyle{definition}
\newtheorem{Definition}[Lemma]{Definition}
\newtheorem{Remark}[Lemma]{Remark}
\newtheorem{DGA's}[Lemma]{DGA's}
\begin{document}

\setlength{\parindent}{0pt}
\setlength{\parskip}{7pt}

\title[Derived Equivalences of Upper Triangular DGA's]
{Derived Equivalences of Upper Triangular Differential Graded Algebras}
\author{Daniel Maycock}

\maketitle
\section{Introduction}
The question of when two derived categories of rings are equivalent has been studied extensively. Morita theory answered the question of when two module categories of rings are equivalent and a version of Morita theory for derived categories was developed by Rickard in \cite{Rik} which made use of the concept of tilting modules. This approach was applied by Ladkani in \cite{Lad} to the situation of derived equivalences of upper triangular matrix rings. In this paper we extend the main results from \cite{Lad} to the more general case of upper triangular matrix differential graded algebras (henceforth referred to as DGAs). For this we will make extensive use of the tool of recollements and in particular the situation given by J\o rgensen in \cite{Jor}.

Section 2 sets out the notation used. We begin properly in section 3 by introducing the upper triangular matrix DGA $\Lambda$ which has the form $\begin{bmatrix}R&M\\0&S\end{bmatrix}$, where $R$ and $S$ are DGAs and $\leftidx{_R}{M}{_S}$ is a $R$-$S$-DG-bimodule. There are also the left-DG-modules $B=\begin{bmatrix}R\\0\end{bmatrix}$ and $C=\begin{bmatrix}M\\S\end{bmatrix}$ which we will use throughout the paper. Next, by proving some properties of $B$ and $C$, we are able to use the main result from \cite{Jor} to obtain the recollement
$$\xymatrix@C5pc{D(R) \ar[r]^{i_*} & D(\Lambda) \ar@/^2pc/[l]^{i^!} \ar@/_2pc/[l]_{i^*} \ar[r]^{j^*} & D(S) \ar@/^2pc/[l]^{j_*} \ar@/_2pc/[l]_{j_!}}$$
where $i_*(_RR)\cong B$ and $j_!(_SS)\cong C$.
We then conclude the section by presenting some useful results obtained from the recollement which we will need in the next section. 

In section 4 we turn our attention to the main aim of the paper, generalizing the main theorem from Ladkani to DGA's. To do so we follow a similar idea as used in the proof of \cite[Theorem 4.5]{Lad}, by considering the DG-module $T=\Sigma i_*X\oplus j_*j^*\Lambda$ where $X$ is compact and $\langle X\rangle=D(R)$, where $\langle X\rangle$ denotes the smallest triangular subcategory containing $X$ which is closed under the taking of coproducts. We begin with a statement of Keller's theorem which we will require to prove the following theorem, our ``first attempt'' at generalising \cite[Theorem 4.5]{Lad}. 
\begin{Theorem*}
Let $X$ be a DG $R$-module such that $_RX$ is compact and $\left\langle _RX\right\rangle=D(R)$. Let $_RM_S$ be compact as a DG-$R$-module. Let $T=\Sigma i_*X\oplus j_*j^*\Lambda$ with $\mathscr{E}=\End_\Lambda(P)$, where $P$ is a K-projective resolution of $T$. Then $\mathscr{E}$ is an DGA with $D(\Lambda)\simeq D(\mathscr{E}^{\op})$.
\end{Theorem*}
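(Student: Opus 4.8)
The plan is to show that a K-projective resolution $P$ of $T$ is a compact generator of $D(\Lambda)$ and then to apply Keller's theorem. As $P\simeq T$ in $D(\Lambda)$, it is enough to check that $T$ is compact and that $\langle T\rangle=D(\Lambda)$.

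For compactness, I would first observe that $i_*$ sends compact DG-$R$-modules to compact DG-$\Lambda$-modules: $i_*$ is a triangle functor preserving coproducts (being the left adjoint of $i^!$), and $i_*({}_RR)\cong B$ is a direct summand of ${}_\Lambda\Lambda$ and hence compact, so the thick subcategory $\{Y\in D(R):i_*Y\text{ is compact}\}$ contains ${}_RR$ and therefore all of $D(R)^{c}$. In particular $\Sigma i_*X$ is compact. For the summand $j_*j^*\Lambda$ I would use the recollement triangle $i_*i^!\Lambda\to\Lambda\to j_*j^*\Lambda\to\Sigma i_*i^!\Lambda$ coming from the counit $i_*i^!\to\id$. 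Using $\Lambda\cong B\oplus C$, the identity $i^!i_*\cong\id$ (so $i^!B\cong{}_RR$), and the computation of $i^!C$ available from the results at the end of Section~3 (concretely $i^!C\cong{}_RM$), one obtains $i^!\Lambda\cong{}_RR\oplus{}_RM$. Since ${}_RM$ is compact by hypothesis, $i^!\Lambda$ is compact in $D(R)$, hence $i_*i^!\Lambda$ is compact in $D(\Lambda)$; as $\Lambda$ is compact and the compact objects form a thick subcategory, the third vertex $j_*j^*\Lambda$ of the triangle is compact. Thus $T$ is compact.

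For generation, since $\langle X\rangle=D(R)$ and $i_*$ is a triangle functor preserving coproducts, $\{Y\in D(R):i_*Y\in\langle i_*X\rangle\}$ is a localizing subcategory of $D(R)$ containing $X$ and hence equal to $D(R)$; applying this to ${}_RR$ and to ${}_RM$ shows that $B$, $i_*M$, and therefore $i_*i^!\Lambda\cong i_*({}_RR\oplus{}_RM)$, all lie in $\langle i_*X\rangle\subseteq\langle T\rangle$. Since $j_*j^*\Lambda$ is a direct summand of $T$, the recollement triangle now forces $\Lambda\in\langle T\rangle$, and as $\langle\Lambda\rangle=D(\Lambda)$ we conclude $\langle T\rangle=D(\Lambda)$.

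It remains to conclude. With $P$ a K-projective resolution of $T$, the object $P$ is a compact generator of $D(\Lambda)$, $\mathscr{E}=\End_\Lambda(P)$ is a DGA (the DG endomorphism algebra of $P$, with $\H^{*}\mathscr{E}\cong\Ext_\Lambda^{*}(T,T)$), and $P$ is naturally a DG-$\Lambda$-$\mathscr{E}^{\op}$-bimodule. By Keller's theorem the functor $\RHom_\Lambda(P,-)$ is then a triangle equivalence $D(\Lambda)\simeq D(\mathscr{E}^{\op})$, which is the assertion. The one substantive point in the argument is the compactness of $j_*j^*\Lambda$, and this is exactly where the hypothesis that ${}_RM$ be compact enters, via $i^!C\cong{}_RM$; everything else is a formal manipulation of the recollement followed by an appeal to Keller's theorem. (The label ``first attempt'' in the statement suggests that it is precisely this compactness assumption on $M$ that one will subsequently try to weaken.)
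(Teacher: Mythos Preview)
Your proof is correct and follows the same overall strategy as the paper: verify that $T$ is compact and generates $D(\Lambda)$, then invoke Keller's theorem. The packaging differs slightly. For the compactness of $j_*j^*\Lambda$ the paper works with the concrete identification $j_*j^*\Lambda\cong C/\left[\begin{smallmatrix}M\\0\end{smallmatrix}\right]$ and the explicit triangle $\left[\begin{smallmatrix}M\\0\end{smallmatrix}\right]\to C\to C/\left[\begin{smallmatrix}M\\0\end{smallmatrix}\right]$, whereas you use the abstract recollement triangle $i_*i^!\Lambda\to\Lambda\to j_*j^*\Lambda$ together with $i^!\Lambda\cong R\oplus M$; these are the same triangle once one splits $\Lambda\cong B\oplus C$ and observes that the $B$-component is trivial. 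Likewise, for $i_*$ preserving compacts the paper does the direct adjunction calculation while you give the equivalent thick-subcategory argument. One small point: the identification $i^!C\cong{}_RM$ you invoke is not actually stated in Section~3, though it is an immediate computation in the same style (since $B=\Lambda e_R$ is K-projective, $\Hom_\Lambda(B,C)\cong e_RC\cong M$). Finally, your closing parenthetical misreads the label ``first attempt'': it refers not to any intention of relaxing the compactness hypothesis on $M$, but to the fact that $\mathscr{E}$ has not yet been identified with an explicit upper-triangular DGA $\tilde\Lambda$, which is what the remainder of the section accomplishes.
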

We then turn our attention to considering $P$, the K-projective resolution of $T$, and by doing so we are able to calculate its endomorphism DGA, which leads to our generalisation of the main theorem of Ladkani below.
\begin{Theorem*}
Let $X$ be a DG $R$-module such that $_RX$ is compact and $\left\langle _RX\right\rangle=D(R)$. Let $_RM_S$ be compact as an DG $R$-module and let $U$ and $V$ be K-projective resolutions of $X$ and $M$ respectively. 
Then for the upper triangular differential graded algebras  
$$\Lambda=\begin{bmatrix}R&M\\0&S\end{bmatrix}\textrm{ and } \tilde{\Lambda}=\begin{bmatrix}S&\Hom_R(V,U)\\0&\Hom_R(U,U)^{\op}\end{bmatrix}$$
we have that $D(\Lambda)\simeq D(\tilde{\Lambda})$.
\end{Theorem*}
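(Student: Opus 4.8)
The plan is to deduce this from the first of the two theorems above. With the present hypotheses that theorem gives $D(\Lambda)\simeq D(\mathscr{E}^{\op})$, where $\mathscr{E}=\End_\Lambda(P)$ and $P$ is a K-projective resolution of $T=\Sigma i_*X\oplus j_*j^*\Lambda$. So it suffices to choose a convenient $P$, compute the endomorphism DGA $\mathscr{E}$, and identify $D(\mathscr{E}^{\op})$ with $D(\tilde\Lambda)$.

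First I would resolve the two summands of $T$ separately. Since $B=i_*({}_RR)$ is a projective direct summand of $\Lambda$ and $U$ is K-projective over $R$, the DG-$\Lambda$-module $i_*U\cong\begin{bmatrix}U\\0\end{bmatrix}$ is K-projective and resolves $i_*X\cong\begin{bmatrix}X\\0\end{bmatrix}$, so $\Sigma i_*U$ resolves $\Sigma i_*X$. For the second summand, $\Lambda=B\oplus C$ with $j^*B=0$ and $j^*C\cong j^*j_!({}_SS)\cong{}_SS$, hence $j^*\Lambda\cong{}_SS$ and $j_*j^*\Lambda\cong j_*({}_SS)$; feeding $\Lambda$ into the canonical triangle $i_*i^!\Lambda\to\Lambda\to j_*j^*\Lambda\to\Sigma i_*i^!\Lambda$ of the recollement and using $i^!C\cong{}_RM$ together with the identification of the counit $i_*i^!C\to C$ with $i_*M\hookrightarrow C$, one obtains $j_*j^*\Lambda\cong\operatorname{Cone}(i_*M\to C)$. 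Replacing $i_*M$ by its K-projective resolution $i_*V\cong\begin{bmatrix}V\\0\end{bmatrix}$, I would take $P_2:=\operatorname{Cone}(\varphi)$ where $\varphi\colon i_*V\to C$ is the $\Lambda$-linear lift of $\begin{bmatrix}V\\0\end{bmatrix}\to\begin{bmatrix}M\\0\end{bmatrix}\hookrightarrow\begin{bmatrix}M\\S\end{bmatrix}$; then $P_2$ is a K-projective resolution of $j_*j^*\Lambda$, and $P:=\Sigma i_*U\oplus P_2$ is a K-projective resolution of $T$.

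Next I would compute $\mathscr{E}=\End_\Lambda(P)$, which under composition is the matrix DGA
$$\mathscr{E}=\begin{bmatrix}\End_\Lambda(\Sigma i_*U)&\Hom_\Lambda(P_2,\Sigma i_*U)\\ \Hom_\Lambda(\Sigma i_*U,P_2)&\End_\Lambda(P_2)\end{bmatrix},$$
evaluating the entries with the adjunctions of the recollement. The adjunction $(i_*,i^!)$ with $i^!i_*\cong\id$ gives $\End_\Lambda(\Sigma i_*U)\cong\End_\Lambda(i_*U)\cong\Hom_R(U,U)$ as DGAs. The corner $\Hom_\Lambda(\Sigma i_*U,P_2)\cong\RHom_\Lambda(i_*\Sigma X,j_*j^*\Lambda)\cong\RHom_S(j^*i_*\Sigma X,j^*\Lambda)=0$ is acyclic, since $j^*i_*=0$. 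Applying $\Hom_\Lambda(-,\Sigma i_*U)$ to the triangle $i_*V\xrightarrow{\varphi}C\to P_2\to\Sigma i_*V$ and using $\Hom_\Lambda(C,\Sigma i_*U)=0$ (the target has vanishing $S$-component) gives $\Hom_\Lambda(P_2,\Sigma i_*U)\cong\Hom_\Lambda(i_*V,i_*U)\cong\Hom_R(V,U)$, with left $\Hom_R(U,U)$-action by postcomposition. Finally, $P_2\simeq j_*({}_SS)$ and the full faithfulness of $j_*$ (one of the results obtained from the recollement) give $\End_\Lambda(P_2)\simeq\End_S({}_SS)=S^{\op}$ as DGAs; lifting an endomorphism of $j_*({}_SS)$ through the triangle $i_*V\to C\to P_2\to$ to ``right multiplication by $s$'' on $V$ and on $C$ then shows that the induced right $\End_\Lambda(P_2)\simeq S^{\op}$-action on $\Hom_\Lambda(P_2,\Sigma i_*U)$ matches the left $S$-action on $\Hom_R(V,U)$ coming from the right $S$-module structure of $V$.

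To finish, note that the acyclic corner $\Hom_\Lambda(\Sigma i_*U,P_2)$ is not a DG-ideal, but the remaining three corners span a unital sub-DGA
$$\mathscr{E}_0=\begin{bmatrix}\Hom_R(U,U)&\Hom_R(V,U)\\ 0&\End_\Lambda(P_2)\end{bmatrix}\subseteq\mathscr{E},$$
and the complementary subcomplex being acyclic makes $\mathscr{E}_0\hookrightarrow\mathscr{E}$ a quasi-isomorphism of DGAs, so $D(\mathscr{E}^{\op})\simeq D(\mathscr{E}_0^{\op})$. Taking opposites of this upper triangular DGA and interchanging its two idempotents gives $\mathscr{E}_0^{\op}\cong\begin{bmatrix}\End_\Lambda(P_2)^{\op}&\Hom_R(V,U)\\ 0&\Hom_R(U,U)^{\op}\end{bmatrix}$, and since $\End_\Lambda(P_2)^{\op}\simeq S$, a quasi-isomorphism of upper triangular matrix DGAs identifies this with $\tilde\Lambda$. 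Hence $D(\Lambda)\simeq D(\mathscr{E}^{\op})\simeq D(\mathscr{E}_0^{\op})\simeq D(\tilde\Lambda)$. The step I expect to be the main obstacle is the bookkeeping of the DG-bimodule structures: verifying that the entrywise simplifications of the $\Hom$-complexes above are compatible with composition, so that the displayed $2\times 2$ array genuinely is the asserted matrix DGA rather than merely a graded-module identification, and---most delicately---checking that replacing $\End_\Lambda(P_2)$ by $S^{\op}$ carries the corner $\Hom_\Lambda(P_2,\Sigma i_*U)$ precisely to the $S$-$\Hom_R(U,U)^{\op}$-bimodule $\Hom_R(V,U)$ occurring in $\tilde\Lambda$; this forces one to choose $V$ as a DG $R$-$S$-bimodule resolution of $M$ and to trace the lift $\varphi$ and the quasi-isomorphism $P_2\simeq j_*({}_SS)$ through the bimodule $M$.
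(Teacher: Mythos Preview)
Your proposal is correct and follows essentially the same route as the paper. The paper resolves $T$ by $P=\Sigma\!\left[\begin{smallmatrix}U\\0\end{smallmatrix}\right]\oplus W$ with $W=\operatorname{Cone}\!\left(\left[\begin{smallmatrix}V\\0\end{smallmatrix}\right]\to C\right)$ (your $P_2$), writes $\mathscr{E}$ as the same $2\times2$ matrix, proves the lower-left corner is acyclic, and then constructs an explicit DGA map $\Phi\colon\tilde\Lambda^{\op}\to\mathscr{E}$ whose image lands in your sub-DGA $\mathscr{E}_0$; so your passage through $\mathscr{E}_0$ and the paper's direct map are the same argument viewed from two sides.

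The difficulty you flag is exactly where the paper spends its effort: rather than invoking full faithfulness of $j_*$ abstractly, it writes down the DGA map $\alpha\colon S^{\op}\to\End_\Lambda(W)$ by hand (your ``right multiplication by $s$ on $V$ and on $C$'') and verifies directly that it is a quasi-isomorphism; it then checks, with explicit Koszul signs, that $\alpha$ together with the identification $\Hom_\Lambda(W,\Sigma\!\left[\begin{smallmatrix}U\\0\end{smallmatrix}\right])\simeq\Hom_R(V,U)$ assembles into a multiplicative map of matrix DGAs. Your sketch would need precisely this computation to be complete, and your caveat that $V$ must be chosen as an $R$--$S$ bimodule resolution is exactly the standing hypothesis the paper imposes in Definition~\ref{UT DG def}.
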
 

One specific advantage of considering the DGA case rather than the ring case is that with the DGA case we can do without a lot of constraints which are required in the ring case to ensure that the derived equivalence is between two triangular matrix rings. 
  
Finally in section 5 we conclude with a look at some special cases. In the first we reconsider the original case in Ladkani, involving just rings and show that by making the same assumptions in our general theorem we obtain the same equivalence.

We then briefly consider what happens in the special case where $\leftidx{_R}{X}=\leftidx{_R}{R}$. In the final example, we require that our DGA's are over some field $k$ and that $R$ is self dual, that is, $\Hom_k(R,k)\cong R$ in the derived category of DG-R-modules. This gives us the following result. 
\begin{Corollary*}
Let $R$ be a self dual finite dimensional DGA and $S$ be a DGA, both over a field $k$. Let $\leftidx{_R}{M}{_S}$ be compact as a DG-$R$-module. Then 
$$\Lambda=\begin{bmatrix}R&M\\0&S\end{bmatrix} \textrm{ and } \tilde{\Lambda}=\begin{bmatrix}S&DM\\0&R\end{bmatrix}$$
are derived equivalent.
\end{Corollary*}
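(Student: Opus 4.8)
The plan is to obtain the Corollary as a special case of the generalisation of Ladkani's main theorem stated above, taking $X={}_RR$. For this choice the hypotheses are immediate: ${}_RR$ is compact in $D(R)$ and is a compact generator, so $\langle {}_RR\rangle=D(R)$, and ${}_RM_S$ is compact as a DG-$R$-module by assumption. Since ${}_RR$ is already K-projective we may take $U={}_RR$ as its own K-projective resolution, and we let $V$ be a K-projective resolution of $M$, chosen as a DG-$R$-$S$-bimodule that is K-projective over $R$ and quasi-isomorphic to $M$. The theorem then gives $D(\Lambda)\simeq D(\tilde\Lambda_0)$, where
$$\tilde\Lambda_0=\begin{bmatrix}S&\Hom_R(V,R)\\0&\End_R({}_RR)^{\op}\end{bmatrix}.$$

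The next step is to identify the two corners. Every endomorphism of ${}_RR$ as a left DG-$R$-module is right multiplication by an element of $R$, and composition of such maps is opposite multiplication; hence $\End_R({}_RR)\cong R^{\op}$ as DGAs and $\End_R({}_RR)^{\op}\cong R$. For the off-diagonal term I would exhibit a zig-zag of quasi-isomorphisms of DG-$S$-$R$-bimodules
$$\Hom_R(V,R)\longrightarrow \Hom_R(V,DR)\xrightarrow{\ \cong\ }\Hom_k(V,k)\longleftarrow \Hom_k(M,k)=DM .$$
Here $\Hom_R(V,R)$ carries a left $S$-action from the right $S$-action on $V$ and a right $R$-action from the right $R$-action on the target $R$; the first arrow is induced by the self-duality quasi-isomorphism ${}_RR\xrightarrow{\ \sim\ }DR=\Hom_k(R,k)$, used at the level of DG-$R$-bimodules; the middle isomorphism is the tensor--hom adjunction $\Hom_R(V,\Hom_k(R,k))\cong\Hom_k(R\otimes_R V,k)\cong\Hom_k(V,k)$, which is natural and hence compatible with all the module actions in sight; and the last arrow is obtained by applying the exact functor $\Hom_k(-,k)$ to the bimodule quasi-isomorphism $V\xrightarrow{\ \sim\ }M$. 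Finite-dimensionality of $R$ ensures that $DR$ and $DM$ remain within the categories to which the theorem applies.

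Finally I would assemble the pieces: a quasi-isomorphism $W\xrightarrow{\ \sim\ }W'$ of DG-$(S,R)$-bimodules induces a quasi-isomorphism of DGAs $\begin{bmatrix}S&W\\0&R\end{bmatrix}\xrightarrow{\ \sim\ }\begin{bmatrix}S&W'\\0&R\end{bmatrix}$, and quasi-isomorphic DGAs have equivalent derived categories. Running this along the zig-zag above identifies $D(\tilde\Lambda_0)$ with $D(\tilde\Lambda)$ for $\tilde\Lambda=\begin{bmatrix}S&DM\\0&R\end{bmatrix}$, and together with the first step this yields $D(\Lambda)\simeq D(\tilde\Lambda)$, as claimed. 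I expect the main difficulty to lie in the middle step: one must track the full DG-$S$-$R$-bimodule structure through the self-duality quasi-isomorphism and the tensor--hom adjunction, so that the zig-zag genuinely lives in the derived category of DG-$S$-$R$-bimodules and therefore descends to a chain of quasi-isomorphisms of the associated upper triangular DGAs.
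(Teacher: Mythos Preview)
Your proposal is correct and follows essentially the same route as the paper: specialise the main theorem to $X={}_RR$ with $U=R$, identify $\End_R({}_RR)^{\op}\cong R$, then use self-duality together with the tensor--hom adjunction to obtain $\Hom_R(V,R)\simeq DV$ and dualise the quasi-isomorphism $V\to M$ to get $DM\to DV$, finally passing to the triangular DGAs. The paper's argument is slightly more terse about the zig-zag and the bimodule compatibilities you flag, but the substance is the same.
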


\section{Notation and terminology}
In this we will fix the notation which we shall use throughout the paper; more details on DGA's and DG-modules can be found in \cite{Kel} and \cite{Fra}.

Throughout this paper we will make use of Differential Graded Algebras; these are always assumed to be over some commutative ground ring $k$ unless stated otherwise.  For any graded object $r$ we will denote its degree by $|r|$.  Note that we shall observe the Koszul sign convention so that whenever two graded elements of degrees $m$ and $n$ are interchanged we introduce a sign $(-1)^{mn}$. 

For a DGA $R$ we can define the opposite DGA, denoted $R^{\op}$, this is the same as $R$ except that the product is given by $r.s=(-1)^{|r||s|}sr$ where . denotes multiplication in $R^{\op}$. We will often identify DG-right-$R$-modules with DG-left-$R^{\op}$-modules.

We shall often need to consider DG-modules with more than one DG-module structure, for instance a DG-left-$R$-right-$S$-module, denoted by $\leftidx{_R}{M}{_S}$. In these cases the different structures are required to be compatible, for the $\leftidx{_R}{M}{_S}$ case this means that the rule $(rm)s=r(ms)$ holds.

\medskip

For a DGA $R$ we denote the category of all DG-left-$R$-modules by Mod$\:R$. We define the homotopy category of $R$, which we denote by $K(R)$, as the category consisting of all DG-left-$R$-modules whose morphisms are the morphisms of DG-modules mudololo homotopy. We define the derived category of $R$, denoted by $D(R)$, from $K(R)$ by formally inverting the quasi-isomorphisms. Both $K(R)$ and $D(R)$ are triangulated categories. A more detailed construction of the derived category and details of triangulated categories can be found in \cite{Har}. 

Since we can identify DG-right-$R$-modules with DG-left-$R^{\op}$-modules we can also identify the derived category of DG-right-$R$-modules with $D(R^{\op})$.

\bigskip

\section{A Recollement Situation}
We begin by defining the differential graded algebras and DG-modules which we will be using throughout the paper.

\medskip

\begin{Definition}
\label{UT DG def}
Throughout this paper, let $R$ and $S$ be Differential Graded algebras with $\leftidx{_R}{M}{_S}$ a DG-bimodule which is quasi-isomorphic to $\leftidx{_R}{V}{_S}$ where $V$ is K-projective as a DG-left-$R$-module and let $\Lambda=\begin{bmatrix}R & M\\0 & S	\end{bmatrix}$ denote the upper triangular matrix DGA with the differential $\partial^\Lambda\begin{bmatrix}r & m\\0 & s	\end{bmatrix}
=\begin{bmatrix}\partial^R r & \partial^M m\\0 & \partial^S	s\end{bmatrix}$.
\end{Definition}

\bigskip

\begin{Remark}
In the case where the base ring $k$ is a field we always have that $\leftidx{_R}{M}{_S}$ is quasi-isomorphic to some $\leftidx{_R}{V}{_S}$ with $V$ K-projective as a DG-$R$-module.
\end{Remark}

\bigskip

\begin{Definition}
Let $e_R=\begin{bmatrix} 1 & 0\\0 & 0\end{bmatrix}$ and $e_S=\begin{bmatrix} 0 & 0\\0 & 1\end{bmatrix}$
and define the DG-left-$\Lambda$-modules $$B=\Lambda e_R=\begin{bmatrix} R\\0\end{bmatrix}\textrm{ and } C=\Lambda e_S=\begin{bmatrix} M\\S\end{bmatrix}$$
where $B$ has the differential $\partial\left(\begin{bmatrix}r\\0\end{bmatrix}\right)=\left(\begin{bmatrix}\partial^Rr\\0\end{bmatrix}\right)$ 
and $C$ has the differential 
$\partial\left(\begin{bmatrix}m\\s\end{bmatrix}\right)=\left(\begin{bmatrix}\partial^Mm\\\partial^Ss\end{bmatrix}\right)$.
\end{Definition}

\bigskip

The first aim is to construct a recollement involving the objects we have defined above. To do this we shall use \cite[Theorem 3.3]{Jor} but before we can use this theorem we first need the following Lemmas involving the DG-modules $B$ and $C$. 

\bigskip

\begin{Definition}
For $X$ a full subcategory of a triangulated category $T$, we can define a full subcategory  
$$X^\bot=\{Y\in T\:|\:\Hom_T(\Sigma^l X,Y)=0 \textrm{ for all } l\}$$
\end{Definition}

\bigskip 

\begin{Lemma} 
$\Lambda\cong B\oplus C$ in $D(\Lambda)$ and hence both $B$ and $C$ are K-projective DG-left-$\Lambda$-modules which are compact in $D(\Lambda)$. 
\end{Lemma}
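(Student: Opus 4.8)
The plan is to exhibit a direct-sum decomposition of $\Lambda$ as a DG-left-$\Lambda$-module and then read off the consequences. First I would observe that $e_R$ and $e_S$ are orthogonal idempotents of degree $0$ in $\Lambda$ with $e_R + e_S = 1$, and that they are cycles for the differential $\partial^\Lambda$ (each has zero differential since the entries $1$ and $0$ are cycles in $R$ and $S$). Hence multiplication on the right by $e_R$ and $e_S$ gives DG-left-$\Lambda$-module maps $\Lambda \to \Lambda e_R = B$ and $\Lambda \to \Lambda e_S = C$, and the inclusions $B \hookrightarrow \Lambda$, $C \hookrightarrow \Lambda$ are DG-left-$\Lambda$-module maps as well. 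From $1 = e_R + e_S$ and $e_R e_S = e_S e_R = 0$ one gets that the canonical map $B \oplus C \to \Lambda$ is an isomorphism of DG-left-$\Lambda$-modules, compatible with differentials because $\partial^\Lambda$ acts entrywise. In particular $\Lambda \cong B \oplus C$ already in $\Mod\Lambda$, hence a fortiori in $D(\Lambda)$.

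Next I would deduce K-projectivity. The free module ${}_\Lambda\Lambda$ is K-projective as a DG-left-$\Lambda$-module (a standard fact: a DGA is K-projective over itself), and K-projectivity passes to direct summands, since if $\Lambda \simeq B \oplus C$ then for any acyclic DG-module $A$ the complex $\Hom_\Lambda(B, A)$ is a direct summand of the acyclic $\Hom_\Lambda(\Lambda, A)$, hence acyclic; the same for $C$. Therefore $B$ and $C$ are K-projective.

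Finally, compactness: ${}_\Lambda\Lambda$ is compact in $D(\Lambda)$ (the representable functor $\Hom_{D(\Lambda)}(\Lambda,-) \cong \H^0(-)$ commutes with coproducts), and a direct summand of a compact object is compact, because $\Hom_{D(\Lambda)}(B,-)$ is a direct summand of $\Hom_{D(\Lambda)}(\Lambda,-)$ as functors, and a direct summand of a functor commuting with coproducts again commutes with coproducts. Hence both $B$ and $C$ are compact.

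I do not expect a serious obstacle here; the statement is essentially the Peirce decomposition of a matrix algebra transported to the DG setting, and the only points requiring any care are (i) checking that the idempotents $e_R, e_S$ are honest degree-$0$ cycles so that the decomposition is a decomposition of \emph{DG}-modules and not merely of graded modules, and (ii) invoking the correct closure properties of K-projectivity and compactness under direct summands. Both are routine.
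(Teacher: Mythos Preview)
Your proposal is correct and takes essentially the same approach as the paper: both arguments exhibit the Peirce decomposition $\Lambda = \Lambda e_R \oplus \Lambda e_S$ as DG-left-$\Lambda$-modules (the paper writes down the explicit maps on matrix entries, you phrase it via the orthogonal idempotents, which amounts to the same thing). If anything your write-up is more complete, since you spell out why K-projectivity and compactness pass to direct summands, whereas the paper leaves these consequences to the word ``hence''.
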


\begin{proof}
Define $\Theta : \Lambda \rightarrow B\oplus C$ and $\Phi : B\oplus C \rightarrow \Lambda$ by 
$$\Theta\left(\begin{bmatrix}r & m\\0 & s	\end{bmatrix}\right) = 
\left(\begin{bmatrix} r\\0\end{bmatrix},\begin{bmatrix} m\\s\end{bmatrix}\right)$$ 
and $$\Phi \left(\left(\begin{bmatrix} r\\0\end{bmatrix},\begin{bmatrix} m\\s\end{bmatrix}\right)\right) = \begin{bmatrix}r & m\\0 & s	\end{bmatrix}.$$ 

\smallskip

It is obvious that $\Theta$ and$\Phi$ are inverses of each other and it is straightforward to check that they are homomorphisms of DG-modules.
So we have that $\Lambda\cong B\oplus C$ as DG-$\Lambda$-modules and hence also in $D(\Lambda)$.
\end{proof}

\bigskip

\begin{Lemma} 
$B\in C^\bot$ as DG-modules and hence in $D(\Lambda)$.
\end{Lemma}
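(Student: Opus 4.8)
The statement to prove is that $B \in C^{\bot}$, i.e. that $\Hom_{D(\Lambda)}(\Sigma^l C, B) = 0$ for all integers $l$. The plan is to compute these Hom-groups directly, exploiting the fact that $C$ is K-projective and compact (established in the previous Lemma), so that $\Hom_{D(\Lambda)}(\Sigma^l C, B)$ is just the degree-$(-l)$ cohomology of the Hom-complex $\Hom_{\Lambda}(C, B)$ — or, what is cleaner here, that $\Hom_{D(\Lambda)}(\Sigma^l C, B) = \H^{-l}\Hom_{\Lambda}(C,B)$ and in fact we can work at the level of actual DG-module maps since $C = \Lambda e_S$ is a direct summand of the free module $\Lambda$.

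First I would use that $C = \Lambda e_S$ is a summand of $\Lambda$, so a DG-$\Lambda$-module map $C \to B$ is determined by the image of $e_S$, which must be an element $b \in B$ with $e_S b = b$ (equivalently $b$ lies in $e_S B$). But $B = \Lambda e_R = \begin{bmatrix} R \\ 0 \end{bmatrix}$, and $e_S \begin{bmatrix} r \\ 0 \end{bmatrix} = \begin{bmatrix} 0 & 0 \\ 0 & 1 \end{bmatrix}\begin{bmatrix} r \\ 0 \end{bmatrix} = \begin{bmatrix} 0 \\ 0 \end{bmatrix}$. Hence $e_S B = 0$, so the only DG-module map $C \to B$ (in any degree) is the zero map, and a fortiori $\Hom_{\Lambda}(C, B)$ is the zero complex. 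Therefore $\H^{-l}\Hom_{\Lambda}(C, B) = 0$ for every $l$. Finally I would invoke that $C$ is K-projective (from the previous Lemma) together with the fact that the target $B$ — being also K-projective, or simply because $\Hom$ out of a K-projective computes derived $\RHom$ — gives $\Hom_{D(\Lambda)}(\Sigma^l C, B) \cong \H^{-l}\RHom_{\Lambda}(C, B) = \H^{-l}\Hom_{\Lambda}(C, B) = 0$ for all $l$, which is precisely the assertion $B \in C^{\bot}$ in $D(\Lambda)$.

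The only genuinely delicate point is the passage from "there are no nonzero DG-module maps $C \to B$" to "$\Hom_{D(\Lambda)}(\Sigma^l C, B) = 0$": one must make sure the K-projectivity is being used on the correct side. Since $C$ is K-projective, $\Hom_{D(\Lambda)}(C, Y) \cong \H^0 \Hom_{\Lambda}(C, Y)$ for every $Y \in D(\Lambda)$, and applying this with $Y = \Sigma^{-l} B$ gives the claim without needing any resolution of $B$ at all. I expect this to be the main thing to get right; the rest is the entirely mechanical matrix computation $e_S B = 0$. It is worth also noting explicitly that the same argument shows $\Hom_{\Lambda}(\Sigma^l C, B)$ vanishes on the nose in $\Mod\,\Lambda$, which is what the phrase "as DG-modules and hence in $D(\Lambda)$" in the statement is pointing at.
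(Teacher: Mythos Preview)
Your proposal is correct and follows essentially the same approach as the paper: both arguments reduce to the observation that any $\Lambda$-linear map $C\to B$ is determined by the image of the generator $e_S=\begin{bmatrix}0\\1\end{bmatrix}$, and that image must lie in $e_S B=0$. The paper is simply terser about the passage to $D(\Lambda)$, relying implicitly on the K-projectivity of $C$ established in the preceding lemma, whereas you spell out explicitly that $\Hom_{D(\Lambda)}(\Sigma^l C,B)\cong \H^{-l}\Hom_\Lambda(C,B)=0$.
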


\begin{proof} 
Let $C \stackrel{f}{\rightarrow} B$ be a morphism of DG-modules. It suffices to show that $f=0$ and since $\begin{bmatrix}0\\1\end{bmatrix}$ generates $C$ we only need to show that $f\left(\begin{bmatrix}0\\1\end{bmatrix}\right)=0$. 

Let $f\left(\begin{bmatrix}0\\1\end{bmatrix}\right)=\begin{bmatrix}r\\0\end{bmatrix}$ for some $r\in R$. Then $$f\left(\begin{bmatrix}0\\1\end{bmatrix}\right)=f\left(e_S.\begin{bmatrix}0\\1\end{bmatrix}\right)=e_Sf\left(\begin{bmatrix}0\\1\end{bmatrix}\right)= \begin{bmatrix}0&0\\0&1\end{bmatrix}\begin{bmatrix}r\\0\end{bmatrix}=0$$ as required, hence $f=0$ and so $B\in C^\bot$.   
\end{proof}

\bigskip

\begin{Lemma}
$B^\bot\cap C^\bot =0$ in $D(\Lambda)$.
\end{Lemma}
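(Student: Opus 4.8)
The plan is to show that if $Y \in D(\Lambda)$ satisfies $\Hom_{D(\Lambda)}(\Sigma^l B, Y) = 0$ and $\Hom_{D(\Lambda)}(\Sigma^l C, Y) = 0$ for all $l$, then $Y = 0$. The key observation is that $B$ and $C$ are K-projective (by the first Lemma), so these derived Hom groups can be computed as homotopy classes of chain maps, and moreover $B = \Lambda e_R$ and $C = \Lambda e_S$ are direct summands of the free module $\Lambda$. Since $\Lambda \cong B \oplus C$ as DG-$\Lambda$-modules, we have $\Hom_{D(\Lambda)}(\Sigma^l \Lambda, Y) \cong \Hom_{D(\Lambda)}(\Sigma^l B, Y) \oplus \Hom_{D(\Lambda)}(\Sigma^l C, Y)$, so both vanishing conditions together are equivalent to $\Hom_{D(\Lambda)}(\Sigma^l \Lambda, Y) = 0$ for all $l$.

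The next step is to identify $\Hom_{D(\Lambda)}(\Sigma^l \Lambda, Y)$ with the cohomology $\H^{-l}(Y)$ (or $\H^l(Y)$ depending on the sign convention). This is the standard fact that the free rank-one module represents the cohomology functor: since $\Lambda$ is K-projective, $\Hom_{D(\Lambda)}(\Lambda, \Sigma^l Y) = \Hom_{K(\Lambda)}(\Lambda, \Sigma^l Y)$, and a chain map $\Lambda \to \Sigma^l Y$ is determined by where it sends $1 \in \Lambda^0$, which must be a degree-$l$ cycle in $Y$, with null-homotopic maps corresponding exactly to boundaries. Hence $\Hom_{D(\Lambda)}(\Sigma^l \Lambda, Y) \cong \H^l(Y)$ for all $l$.

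Combining these, the hypothesis $Y \in B^\bot \cap C^\bot$ forces $\H^l(Y) = 0$ for all $l$, i.e. $Y$ is acyclic, which means $Y \cong 0$ in $D(\Lambda)$. I expect the main (minor) obstacle to be purely bookkeeping: getting the degree shifts and the Koszul signs right when unwinding the identification of chain maps out of $\Lambda$ with cycles in $Y$, and making sure the direct sum decomposition $\Hom(\Sigma^l \Lambda, Y) \cong \Hom(\Sigma^l B, Y) \oplus \Hom(\Sigma^l C, Y)$ is applied in the derived category rather than merely at the level of modules — but this is justified because $B$ and $C$ are K-projective, so no resolution is needed. There is no serious conceptual difficulty here; the lemma is essentially the statement that $\{B, C\}$ generates $D(\Lambda)$ in the weak sense that their right-orthogonal is zero, and this is immediate from $B \oplus C \cong \Lambda$.
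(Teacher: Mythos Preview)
Your proposal is correct and follows essentially the same argument as the paper: both use the decomposition $\Lambda \cong B \oplus C$ together with the identification $\Hom_{D(\Lambda)}(\Lambda,\Sigma^i Y)\cong \H^i Y$ to conclude that $Y$ is acyclic, hence zero in $D(\Lambda)$. The only cosmetic difference is that the paper writes the shifts on the target rather than the source, which resolves your sign-bookkeeping worry trivially.
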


\begin{proof} 
Let $X\in B^\bot\cap C^\bot$, then $\Hom_{D(\Lambda)}(\Sigma^iB,X)=0$ and \\$\Hom_{D(\Lambda)}(\Sigma^iC,X)=0$ for each $i$. 
$$H^iX\cong H^i\Hom_\Lambda(\Lambda,X)\cong \Hom_{K(\Lambda)}(\Lambda,\Sigma^iX)$$
$$\cong \Hom_{D(\Lambda)}(\Lambda,\Sigma^iX) \cong \Hom_{D(\Lambda)}(B\oplus C,\Sigma^iX)$$
$$\cong \Hom_{D(\Lambda)}(B,\Sigma^iX)\oplus \Hom_{D(\Lambda)}(C,\Sigma^iX)$$ 
$$\cong 0\oplus 0=0$$
for all $i$. Hence we have that $X\cong 0$ in $D(\Lambda)$ and so $B^\bot\cap C^\bot =0$. 
\end{proof}

\bigskip

We now have shown that $B$ and $C$ satisfy the conditions required to apply \cite[Theorem 3.3]{Jor}. However before we do so we prove the following lemma about the endomorphism DGAs of $B$ and $C$.

\begin{Lemma} 
Let $\mathscr{F}=\End_\Lambda(B)$ and $\mathscr{G}=\End_\Lambda(C)$, then $\mathscr{F}^{\op}\cong R$ and $\mathscr{G}^{\op}\cong S$ as Differential Graded Algebras.
\end{Lemma}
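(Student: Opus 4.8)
The plan is to exploit that $B=\Lambda e_R$ and $C=\Lambda e_S$ are precisely the direct summands of $\Lambda$ cut out by the degree-zero idempotents $e_R,e_S$, each of which is a cycle since $\partial^\Lambda e_R=\partial^\Lambda e_S=0$ (the unit of a DGA is always a cycle). For an idempotent $e$ of this kind the usual identification $\End_\Lambda(\Lambda e)\cong(e\Lambda e)^{\op}$ holds already at the level of honest DGAs, the passage to the opposite being forced by the Koszul sign rule; moreover $e_R\Lambda e_R\cong R$ and $e_S\Lambda e_S\cong S$ as DGAs, since each is the evident diagonal corner of $\Lambda$ with the differential inherited from $\partial^\Lambda$. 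Granting this, one obtains $\mathscr F^{\op}\cong R$ and $\mathscr G^{\op}\cong S$ at once. Rather than invoke the general statement I would spell out the isomorphisms directly.

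For $\mathscr F=\End_\Lambda(B)$: the module $B$ is generated over $\Lambda$ by $\left[\begin{smallmatrix}1\\0\end{smallmatrix}\right]$, and every element of $B$ has the form $\left[\begin{smallmatrix}r\\0\end{smallmatrix}\right]$, so a homogeneous $f\in\mathscr F$ is determined by a unique $r\in R$ with $f\!\left(\left[\begin{smallmatrix}1\\0\end{smallmatrix}\right]\right)=\left[\begin{smallmatrix}r\\0\end{smallmatrix}\right]$; I would define $\phi\colon\mathscr F\to R$ by $\phi(f)=r$. This $\phi$ is manifestly degree-preserving and bijective, and since $\partial^B\!\left[\begin{smallmatrix}1\\0\end{smallmatrix}\right]=\left[\begin{smallmatrix}\partial^R1\\0\end{smallmatrix}\right]=0$, applying the Hom-complex differential $\partial(f)=\partial^B\!f-(-1)^{|f|}f\partial^B$ to $\left[\begin{smallmatrix}1\\0\end{smallmatrix}\right]$ shows $\phi$ is a chain map. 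The only computation of substance is multiplicativity: evaluating $(f\circ g)\!\left(\left[\begin{smallmatrix}1\\0\end{smallmatrix}\right]\right)$, rewriting $\left[\begin{smallmatrix}s\\0\end{smallmatrix}\right]=\left[\begin{smallmatrix}s&0\\0&0\end{smallmatrix}\right]\left[\begin{smallmatrix}1\\0\end{smallmatrix}\right]$ and pulling the matrix past the homogeneous map $f$ with its Koszul sign, one finds $\phi(f\circ g)=(-1)^{|f||g|}\phi(g)\phi(f)$, which is exactly the statement that $\phi$ is multiplicative when viewed as a map out of $\mathscr F^{\op}$; hence $\mathscr F^{\op}\cong R$ as DGAs. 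For $\mathscr G=\End_\Lambda(C)$ the argument is identical with the generator $\left[\begin{smallmatrix}0\\1\end{smallmatrix}\right]$ of $C$, the one extra point being that although $C$ does contain elements with nonzero $M$-component, the identity $f\!\left(\left[\begin{smallmatrix}0\\1\end{smallmatrix}\right]\right)=f\!\left(e_S\left[\begin{smallmatrix}0\\1\end{smallmatrix}\right]\right)=e_Sf\!\left(\left[\begin{smallmatrix}0\\1\end{smallmatrix}\right]\right)$ (the same device used to prove $B\in C^\bot$) forces $f\!\left(\left[\begin{smallmatrix}0\\1\end{smallmatrix}\right]\right)=\left[\begin{smallmatrix}0\\s\end{smallmatrix}\right]$ for a unique $s\in S$, and then $f\mapsto s$ gives $\mathscr G^{\op}\cong S$.

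The main — essentially the only — obstacle is the sign bookkeeping: one has to verify carefully that composition of DG-endomorphisms corresponds to the opposite product under the identification with $e\Lambda e$, and keep the sign in the Hom-complex differential straight. Once the Koszul signs are under control, the statement reduces to the elementary DGA isomorphisms $e_R\Lambda e_R\cong R$ and $e_S\Lambda e_S\cong S$.
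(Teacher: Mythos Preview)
Your proposal is correct and follows essentially the same route as the paper: both arguments exploit that $B$ and $C$ are cyclic over $\Lambda$ with generators $\left[\begin{smallmatrix}1\\0\end{smallmatrix}\right]$ and $\left[\begin{smallmatrix}0\\1\end{smallmatrix}\right]$, identify an endomorphism with the image of the generator, and for $C$ use the idempotent $e_S$ to force the $M$-component of that image to vanish. The paper simply writes the map in the other direction (from $R^{\op}$ into $\mathscr F$, respectively $S^{\op}$ into $\mathscr G$) and declares the DGA-homomorphism verification ``straightforward''; your version supplies the Koszul-sign computation and the conceptual framing via $\End_\Lambda(\Lambda e)\cong(e\Lambda e)^{\op}$, but the underlying argument is the same.
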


\begin{proof} 
Since $\begin{bmatrix}1\\0\end{bmatrix}$ is a generator of $B$ each element of $\End_\Lambda(B)$ depends entirely on where it sends $\begin{bmatrix}1\\0\end{bmatrix}$. For each $r\in R$ define the homomorphism $f_r$ as the element of $\mathscr{F}$ which sends $\begin{bmatrix}1\\0\end{bmatrix}$ to 
$\begin{bmatrix}r\\0\end{bmatrix}$.

We can now define $\phi:R^{\op} \rightarrow \mathscr{F}$ by $\phi(r)=f_r$. Since elements of $\mathscr{F}$ depend entirely on where they send $\begin{bmatrix}1\\0\end{bmatrix}$ this is obviously a bijection. It is also straightforward to show that $\phi$ is a homomorphism and so an isomorphism of DGAs.

\medskip

Now let $g\in\mathscr{G}$. Since $C$ is generated by $\begin{bmatrix}0\\1\end{bmatrix}$ we know that $g$ depends entirely on where it sends $\begin{bmatrix}0\\1\end{bmatrix}$. Let $g\left(\begin{bmatrix}0\\1\end{bmatrix}\right)=\begin{bmatrix}m\\s\end{bmatrix}.$ 

However $g\left(e_S.\begin{bmatrix}0\\1\end{bmatrix}\right)=g\left(\begin{bmatrix}0\\1\end{bmatrix}\right)=e_Sg\left(\begin{bmatrix}0\\1\end{bmatrix}\right)=e_S\begin{bmatrix}m\\s\end{bmatrix}
=\begin{bmatrix}0\\s\end{bmatrix}$, so $m=0$ and hence $g\left(\begin{bmatrix}0\\1\end{bmatrix}\right)=\begin{bmatrix}0\\s\end{bmatrix}$. 
So for each $s\in S$ we can define the homomorphism $g_s\in\mathscr{G}$ as the element of $\mathscr{G}$ which sends $\begin{bmatrix}0\\1\end{bmatrix}$ to $\begin{bmatrix}0\\s\end{bmatrix}.$

Hence we can define a map $\theta:S^{\op} \rightarrow \mathscr{G}$ sending $s\mapsto g_s$ which is easily shown to be an isomorphism and so $S^{\op}\cong\mathscr{G}$.  
\end{proof}
   
\bigskip

By taking the above lemmas together with \cite[Theorem 3.3]{Jor} we get the following recollement:

$$\xymatrix@C5pc{D(R) \ar[r]^{i_*} & D(\Lambda) \ar@/^2pc/[l]^{i^!} \ar@/_2pc/[l]_{i^*} \ar[r]^{j^*} & D(S) \ar@/^2pc/[l]^{j_*} \ar@/_2pc/[l]_{j_!}}.$$

Five of the functors are given by 
$$\begin{array}{ll}
&j_!(\_)=\leftidx{_\Lambda}{C}{_S}\stackrel{L}{\otimes}_S\_,\\
i_*(\_)=\leftidx{_\Lambda}{B}{_R}\stackrel{L}{\otimes}_R\_,&j^*(\_)=\RHom_\Lambda(_{\Lambda}C_S,\_),\\
i^!(\_)=\RHom_\Lambda(_{\Lambda}B_R,\_),&j_*(\_)=\RHom_S(_SC^*_\Lambda,\_).
\end{array}$$

\medskip

Here $_SC^*_\Lambda =\RHom_\Lambda (_\Lambda C_S,\Lambda)$.

\medskip

In particular, $i_*(R)\cong B$ and $j_!(S)\cong C.$

\bigskip

We shall now end this section with a number of results, involving the recollement we have constructed, which we will find to be of great use in the next section. 

\bigskip

\begin{Remark}
The functor $i_*(-)=\leftidx{_\Lambda}{B}{_R}\stackrel{L}{\otimes}_R\_$ sends a DG-$R$-module $X$ to the DG-$\Lambda$-module $\begin{bmatrix}X\\0\end{bmatrix}$. 
\end{Remark}

\bigskip

\begin{Proposition}
\label{Lem: C* K-proj}
For $_SC^*_\Lambda =\RHom_\Lambda (_\Lambda C_S,\Lambda)$ we have that:
\begin{enumerate}

	\item $C^*\cong _S\!\begin{bmatrix}0 & S\end{bmatrix}_\Lambda$ as DG-left-$S$-right-$\Lambda$-modules where $\begin{bmatrix}0&S\end{bmatrix}$ has the differential 
	$$\partial^{\left[\begin{smallmatrix}0 & S\end{smallmatrix}\right]}\left(\begin{bmatrix}0 & s\end{bmatrix}\right)
	=\begin{bmatrix}0 & \partial^Ss \end{bmatrix}.$$ 
	
	\medskip
	
	\item $\leftidx{_S}{C}{_\Lambda^*}$ is a K-projective object over both $S$ and $\Lambda$.
\end{enumerate}

\end{Proposition}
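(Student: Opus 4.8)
The plan is to compute $C^* = \RHom_\Lambda({}_\Lambda C_S, \Lambda)$ directly from the definition, and to this end I first need a K-projective resolution of $C$ over $\Lambda$. But by the first Lemma we already have $\Lambda \cong B \oplus C$ in $K(\Lambda)$, so $C$ is itself K-projective; therefore $\RHom_\Lambda({}_\Lambda C_S, \Lambda)$ is just represented by the honest Hom-complex $\Hom_\Lambda(C, \Lambda)$. So for part (i) I would show $\Hom_\Lambda(C, \Lambda) \cong \begin{bmatrix} 0 & S \end{bmatrix}$ as DG-$S$-$\Lambda$-bimodules. The argument is the same bookkeeping used in the endomorphism-DGA lemma above: $C$ is generated by $\begin{bmatrix} 0 \\ 1 \end{bmatrix}$, so a $\Lambda$-linear map $f \colon C \to \Lambda$ is determined by $f\left(\begin{bmatrix} 0 \\ 1 \end{bmatrix}\right)$; applying $e_S$ on the left exactly as in the proof of Lemma~\ref{...} forces $f\left(\begin{bmatrix} 0 \\ 1 \end{bmatrix}\right) = \begin{bmatrix} 0 & s_0 \end{bmatrix}$ for a unique $s_0 \in S$, and conversely every such choice gives a well-defined $\Lambda$-linear map (one must check compatibility with the right $\Lambda$-action, i.e. that $\begin{bmatrix} m \\ s \end{bmatrix} \cdot \begin{bmatrix} r' & m' \\ 0 & s' \end{bmatrix}$ lands where it should, which is a short direct calculation). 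This gives a bijection $\Hom_\Lambda(C, \Lambda) \leftrightarrow \begin{bmatrix} 0 & S \end{bmatrix}$; I would then check it is additive, respects degrees, intertwines the differentials (sending $f$ to $f$ composed with $\partial^\Lambda$ minus $(-1)^{|f|}\partial^\Lambda \circ f$ corresponds to $\begin{bmatrix} 0 & s_0 \end{bmatrix} \mapsto \begin{bmatrix} 0 & \partial^S s_0 \end{bmatrix}$), and is compatible with the left-$S$ and right-$\Lambda$ actions. The left-$S$-structure on $\Hom_\Lambda(C,\Lambda)$ comes from the right-$S$-action on $C$, and tracing it through the identification yields precisely left multiplication of $S$ on $\begin{bmatrix} 0 & S \end{bmatrix}$ (with the Koszul sign, since we are passing to $\RHom$ contravariantly); I would remark that the sign conventions work out and not belabour them.

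For part (ii), K-projectivity over $\Lambda$ is immediate from part (i) together with the first Lemma: $\begin{bmatrix} 0 & S \end{bmatrix}$ is visibly isomorphic as a right-$\Lambda$-module (equivalently left-$\Lambda^{\op}$-module) to $e_S \Lambda$, a direct summand of $\Lambda$, hence a summand of a free DG-$\Lambda^{\op}$-module, hence K-projective. (Alternatively, $\Hom_\Lambda(C, \Lambda)$ being the $\Lambda$-dual of the compact K-projective module $C$ is automatically K-projective over $\Lambda^{\op}$ — but the explicit summand description is cleaner.) For K-projectivity over $S$: as a left-$S$-module, $\begin{bmatrix} 0 & S \end{bmatrix}$ is just a shift-free copy of $S$ acting on itself, i.e. it is isomorphic to ${}_S S$, which is K-projective over $S$ by definition of the free module. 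So both claims reduce to recognising the bimodule as a summand of a rank-one free module on each side.

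The main obstacle — really the only non-formal point — is verifying that the identification in part (i) is compatible with all the structures simultaneously and with the correct signs: one is dualising a bimodule, so the left-$S$-action on $C^*$ is built from the right-$S$-action on $C$ via $(s \cdot f)(c) = (-1)^{|s||f|} f(cs)$ or $f(sc)$ depending on convention, and one must confirm this matches left multiplication on $\begin{bmatrix} 0 & S \end{bmatrix}$ rather than something twisted. This is routine but is where care is needed; everything else (the bijection, the differential, K-projectivity) follows the template already established in the preceding lemmas. I would therefore write part (i) as ``the map $f \mapsto f\left(\begin{bmatrix} 0 \\ 1 \end{bmatrix}\right)$ is checked to be an isomorphism of DG-$S$-$\Lambda$-bimodules by a direct computation, using that $C$ is generated over $\Lambda$ by $\begin{bmatrix} 0 \\ 1 \end{bmatrix}$ and that $e_S$ acts as the identity on this generator,'' and part (ii) as the two one-line summand arguments above.
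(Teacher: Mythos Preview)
Your proposal is correct and follows essentially the same approach as the paper: both reduce $\RHom$ to $\Hom$ via the K-projectivity of $C$, use that $C$ is generated by $\begin{bmatrix}0\\1\end{bmatrix}$ together with the $e_S$-trick to identify $\Hom_\Lambda(C,\Lambda)$ with $\begin{bmatrix}0&S\end{bmatrix}$, and then deduce K-projectivity on each side by recognising $\begin{bmatrix}0&S\end{bmatrix}$ as $e_S\Lambda$ (a summand of $\Lambda_\Lambda$) and as ${}_SS$. The paper spells out the right-$\Lambda$-module decomposition $\Lambda_\Lambda\cong\begin{bmatrix}R&M\end{bmatrix}_\Lambda\oplus\begin{bmatrix}0&S\end{bmatrix}_\Lambda$ explicitly rather than invoking the idempotent, but this is the same argument.
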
 

\begin{proof}
(i) First observe that $C$ is generated by $\begin{bmatrix}  0\\ 1\end{bmatrix}$ and that $$\RHom_\Lambda (_\Lambda C_S,\Lambda)\simeq \Hom_\Lambda (_\Lambda C_S,\Lambda)$$ since $C$ is K-projective over $\Lambda$.

Let $\theta\in\Hom_\Lambda (C,\Lambda)$ such that $\theta\left(\begin{bmatrix}  0\\ 1\end{bmatrix}\right)= 
\begin{bmatrix}r & m\\0 & s	\end{bmatrix} \in \Lambda$. 

However $$\theta\left(\begin{bmatrix}  0\\ 1\end{bmatrix}\right)= \theta\left(e_S.\begin{bmatrix}  0\\ 1\end{bmatrix}\right)=e_S.\theta\left(\begin{bmatrix}  0\\ 1\end{bmatrix}\right)=e_S\begin{bmatrix}r & m\\0 & s	\end{bmatrix}=\begin{bmatrix}0 & 0\\0 & s	\end{bmatrix}.$$

So $\theta\left(\begin{bmatrix}  0\\ 1\end{bmatrix}\right)= 
\begin{bmatrix}0 & 0\\0 & s	\end{bmatrix}$.

So for every $s\in S$ we can define $\theta_s\in\Hom_\Lambda(C,\Lambda)$ to be the element which sends $\begin{bmatrix}  0\\ 1\end{bmatrix}$ to $\begin{bmatrix}0 & 0\\0 & s	\end{bmatrix}$.

We can now use this to define an map $\Theta:\Hom_\Lambda(C,\Lambda)\rightarrow \begin{bmatrix} 0 & S \end{bmatrix}$ given by $\Theta(\theta_s)=\begin{bmatrix} 0 & s \end{bmatrix}$. This map is obviously a bijection and it is straightforward to check that it is an isomorphism of DG-left-S-right-$\Lambda$-modules.

\medskip

(ii) To see that $C^*$ is K-projective over $S$ we observe that $C^*\cong S$ as $S$-modules. It remains to show now that $C^*$ is also K-projective over $\Lambda$. We do this by showing that $C^*\cong\begin{bmatrix} 0 & S \end{bmatrix}$ is a direct summand of $\Lambda$ as DG-right-$\Lambda$-modules.

\smallskip

First observe that $\begin{bmatrix} R & M \end{bmatrix}$ is a DG-right-$\Lambda$-module with the differential $\partial^ {\left[\begin{smallmatrix} r & m \end{smallmatrix}\right]}= \begin{bmatrix} \partial^Rr & \partial^Mm \end{bmatrix}$. 

\smallskip

Now define $\Phi:\Lambda_\Lambda\rightarrow\begin{bmatrix} R & M \end{bmatrix}_\Lambda\oplus\begin{bmatrix} 0 & S \end{bmatrix}_\Lambda$ such that $$\Phi\left(\begin{bmatrix} r & m \\ 0 & s \end{bmatrix}\right)=\left(\begin{bmatrix} r & m \end{bmatrix},\begin{bmatrix} 0 & s \end{bmatrix}\right).$$

It is clear to see that $\Phi$ is bijective and a homomorphism of DG-modules. So $C^*_\Lambda\cong\begin{bmatrix} 0 & S \end{bmatrix}_\Lambda$ is a direct summand of $\Lambda_\Lambda$ and so is a K-projective DG-right-$\Lambda$-module.

\end{proof}

\bigskip

\begin{Lemma}
\label{useful facts}
In the set up of the recollement we have that:
\begin{enumerate} 
	\item $j^*(\Lambda)\cong\leftidx{_S}{S}$ in $D(S)$,
	
	\medskip
	
	\item $j_*(_SS)\cong  \dfrac{\leftidx{_\Lambda}{C}}{\leftidx{_\Lambda}{\left[\begin{smallmatrix} M\\0\end{smallmatrix}\right]}{}}$ in $D(\Lambda)$.
\end{enumerate}
\end{Lemma}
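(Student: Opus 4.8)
For part (i), the plan is to compute $j^*(\Lambda) = \RHom_\Lambda({}_\Lambda C_S, \Lambda)$ directly. Since $C$ is K-projective over $\Lambda$ (by the first Lemma), the derived Hom is just the ordinary Hom, so I would identify $\Hom_\Lambda(C, \Lambda)$ as a DG-right-$S$-module — equivalently a DG-left-$S$-module after the usual identification. But this is exactly the content of Proposition~\ref{Lem: C* K-proj}(i)--(ii): we have already shown $C^* \cong {}_S[\begin{smallmatrix}0 & S\end{smallmatrix}]_\Lambda$ and that this is isomorphic to ${}_SS$ as an $S$-module. So part (i) should follow immediately once I observe that $j^*(\Lambda) = C^*$ regarded in $D(S)$ via the $S$-action, and cite the proposition. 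The only thing to be careful about is tracking which side the relevant module structure sits on and confirming the $S$-module structure on $[\begin{smallmatrix}0 & S\end{smallmatrix}]$ really is the regular one, but the computation in the proposition already does this.

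For part (ii), I would start from the definition $j_*(\_) = \RHom_S({}_SC^*_\Lambda, \_)$, so $j_*({}_SS) = \RHom_S(C^*, S)$. Using $C^* \cong {}_S[\begin{smallmatrix}0 & S\end{smallmatrix}]_\Lambda$ from the proposition, and the fact that $C^*$ is K-projective over $S$ (also from the proposition, since $C^* \cong S$ over $S$), the derived Hom becomes $\Hom_S([\begin{smallmatrix}0 & S\end{smallmatrix}], S)$ with its residual left-$\Lambda$-module structure coming from the right-$\Lambda$-structure on $C^*$. Since $[\begin{smallmatrix}0 & S\end{smallmatrix}] \cong S$ freely as a left-$S$-module, $\Hom_S([\begin{smallmatrix}0 & S\end{smallmatrix}], S) \cong S$ as a $k$-module; the entire point is to identify the left-$\Lambda$-action that is induced. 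I expect this to be the main obstacle: one must carefully chase the right-$\Lambda$-action on $[\begin{smallmatrix}0 & S\end{smallmatrix}]_\Lambda$ through the Hom functor to see that the resulting DG-left-$\Lambda$-module is $[\begin{smallmatrix}M\\S\end{smallmatrix}]$ with the submodule $[\begin{smallmatrix}M\\0\end{smallmatrix}]$ acting as zero, i.e.\ the quotient $C / [\begin{smallmatrix}M\\0\end{smallmatrix}]$.

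Concretely, for part (ii) I would argue as follows. An element of $\Hom_S([\begin{smallmatrix}0&S\end{smallmatrix}], S)$ is determined by the image of $[\begin{smallmatrix}0&1\end{smallmatrix}]$, which may be any $s \in S$; call the corresponding map $\psi_s$. The left-$\Lambda$-action is $(\lambda \cdot \psi_s)(x) = \psi_s(x \lambda)$ for $\lambda \in \Lambda$. Writing $\lambda = [\begin{smallmatrix}r & m\\0 & s'\end{smallmatrix}]$, we compute $[\begin{smallmatrix}0 & 1\end{smallmatrix}]\lambda = [\begin{smallmatrix}0 & s'\end{smallmatrix}]$, which annihilates the $r$ and $m$ components entirely. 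Thus $\lambda \cdot \psi_s = \psi_{s's}$, so the $R$-component and the $M$-off-diagonal component of $\Lambda$ act trivially, while the $S$-diagonal acts by left multiplication. Comparing with the left-$\Lambda$-module $C = [\begin{smallmatrix}M\\S\end{smallmatrix}]$, on which $[\begin{smallmatrix}r & m\\0 & s'\end{smallmatrix}]$ sends $[\begin{smallmatrix}m_0\\s_0\end{smallmatrix}]$ to $[\begin{smallmatrix}rm_0 + m s_0\\ s' s_0\end{smallmatrix}]$, we see that quotienting $C$ by $[\begin{smallmatrix}M\\0\end{smallmatrix}]$ kills precisely the first row, leaving the $S$-action in the second row — which matches $j_*({}_SS)$. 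I would then note the differentials agree (both are the diagonal $\partial^S$ on the surviving component) and conclude that $j_*({}_SS) \cong C / [\begin{smallmatrix}M\\0\end{smallmatrix}]$ in $D(\Lambda)$, as claimed.
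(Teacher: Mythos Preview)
Your proposal is correct and follows essentially the same route as the paper: part (i) is exactly the paper's one-line argument via $j^*(\Lambda)=C^*\cong{}_SS$, and for part (ii) both you and the paper replace $\RHom_S$ by $\Hom_S$ using K-projectivity of $C^*$, parametrize $\Hom_S([\begin{smallmatrix}0&S\end{smallmatrix}],S)$ by the image of $[\begin{smallmatrix}0&1\end{smallmatrix}]$, and identify the resulting DG-$\Lambda$-module with $C/[\begin{smallmatrix}M\\0\end{smallmatrix}]$ via $\psi_s\mapsto\overline{[\begin{smallmatrix}0\\s\end{smallmatrix}]}$. If anything, your explicit chase of the induced left-$\Lambda$-action is more detailed than the paper, which simply asserts the map is a DG-$\Lambda$-module homomorphism.
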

\begin{proof}
(i) $j^*(\Lambda)=\RHom_\Lambda(\leftidx{_\Lambda}{C}{_S},\Lambda)=\leftidx{_S}{C}{^*}\cong\leftidx{_S}{S}$.
	
\medskip
	
(ii) Since $C^*$ is a K-projective $S$-module we have that
$$j_*(_SS)=\RHom_S(_SC^*_\Lambda,_SS)\cong \Hom_S(_SC^*_\Lambda,_SS)\cong \Hom_S(_S\begin{bmatrix} 0 & S \end{bmatrix}_\Lambda,_SS).$$

Now observe that $\leftidx{_S}{\begin{bmatrix} 0 & S \end{bmatrix}}$ is generated by $\begin{bmatrix} 0 & 1 \end{bmatrix}$ and for all $s\in S$ define $\phi_s \in \Hom_S(\leftidx{_S}{C}{^*_\Lambda},\leftidx{_S}{S})$ to be the element which sends $\begin{bmatrix} 0 & 1 \end{bmatrix}$ to $s$. We can now define the map $\Phi : \Hom_S(\leftidx{_S}{C}{^*_\Lambda},\leftidx{_S}{S}) \rightarrow \leftidx{_\Lambda}{C}/\leftidx{_\Lambda}{\left[\begin{smallmatrix} M\\0\end{smallmatrix}\right]}$ given by $\Phi(\phi_s)=\overline{\begin{bmatrix} 0\\s\end{bmatrix}}$, where $\overline{\begin{bmatrix} 0\\s\end{bmatrix}}$ denotes the element $\begin{bmatrix} 0\\s\end{bmatrix}+\begin{bmatrix}M\\0\end{bmatrix}$ in $\leftidx{_\Lambda}{C}/\leftidx{_\Lambda}{\left[\begin{smallmatrix} M\\0\end{smallmatrix}\right]}$. This is obviously a bijection and it is easy to show that it is a homomorphism of DG-$\Lambda$-modules.
\end{proof}
\bigskip

\section{Derived Equivalences of Upper Triangular DGA's}

We are now almost in the position where we can make a start on what is the main aim of the paper, to obtain a generalised version of \cite[Theorem 4.5]{Lad} for upper triangular DGAs. In Theorem \ref{1st attempt}, which is the first major step towards our goal, we obtain a dervived equivalence between $D(\Lambda)$ and $D(\mathscr{E})$, where $\mathscr{E}$ is the endomorphism DGA of a K-projective resoultion of the DG-module $T=\Sigma i_*X\oplus j_*j^*\Lambda$. We then follow this up by constructing a K-projective resoultion for $T$ in proposition \ref{Structure of P} which in turn is followed by the structure of the endomorphism DGA $\mathscr{E}$ in Proposition \ref{structure of E}. The remainder of this section is involved in the details of computing an quasi-isomorphisms between the DGA $\mathscr{E}$ and the upper triangular matrix DGA $\tilde{\Lambda}=\begin{bmatrix}S&\Hom_R(V,U)\\0&\Hom_R(U,U)^{\op}\end{bmatrix}$ with the final result being Theorem \ref{Main}, the main result of the paper which gives us a derived equivalnence between the upper triangular matrix DGAs $\Lambda$ and $\tilde{\Lambda}$. 

We start however with a statement of Keller's Theorem which we use in the proof of Theorem \ref{1st attempt}.

\begin{Theorem}[Keller's Theorem]
\label{Keller's Theorem}
Let $A$ be a DGA and let $N$ be a K-projective DG $A$-module which is compact in $D(A)$ such that $\langle N\rangle=D(A)$ and let $\mathscr{H}=\End_A(N,N)$. Then $D(A)\simeq D(\mathscr{H}^{\op})$.
\end{Theorem}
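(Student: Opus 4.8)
The plan is to build an explicit adjoint pair between $D(A)$ and $D(\mathscr{H}^{\op})$ and to prove it is an equivalence by checking the unit and counit on generators. Since $N$ is K-projective, the DGA $\mathscr{H}=\End_A(N,N)$ genuinely computes $\RHom_A(N,N)$, and $N$ carries the structure of a DG-$A$-$\mathscr{H}^{\op}$-bimodule, the right $\mathscr{H}^{\op}$-action being evaluation $n\cdot f=f(n)$ (with the Koszul sign forced by the grading). I would then consider the two functors
$$F=\RHom_A(\leftidx{_A}{N}{_{\mathscr{H}^{\op}}},-)\colon D(A)\longrightarrow D(\mathscr{H}^{\op}),\qquad
G=\leftidx{_A}{N}{_{\mathscr{H}^{\op}}}\stackrel{L}{\otimes}_{\mathscr{H}^{\op}}-\colon D(\mathscr{H}^{\op})\longrightarrow D(A),$$
which form an adjoint pair $(G,F)$ by the derived tensor--hom adjunction; because $N$ is K-projective over $A$, $F$ is computed simply as $\Hom_A(N,-)$.

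Next I would record the transport properties. The left adjoint $G$ preserves coproducts automatically, and $F=\RHom_A(N,-)$ preserves coproducts precisely because $N$ is \emph{compact} in $D(A)$; both functors are triangulated. I would then evaluate on the free generators: $F(N)=\Hom_A(N,N)=\mathscr{H}$, which is the free rank-one object of $D(\mathscr{H}^{\op})$, and $G$ applied to that free object is $N\stackrel{L}{\otimes}_{\mathscr{H}^{\op}}\mathscr{H}^{\op}\cong N$ in $D(A)$ (no derived correction is needed, the free module being K-projective over $\mathscr{H}^{\op}$); unwinding the adjunction one checks that under these identifications the counit $GF(N)\to N$ is the identity of $N$ and the unit $\mathscr{H}\to FG(\mathscr{H})$ is the identity of $\mathscr{H}$.

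Now the argument is the standard ``localizing subcategory'' trick, run twice. Let $\mathcal{Y}\subseteq D(A)$ be the full subcategory of objects $Y$ for which the counit $GF(Y)\to Y$ is an isomorphism. Since $F$ and $G$ are triangulated and preserve coproducts, $\mathcal{Y}$ is a triangulated subcategory of $D(A)$ closed under coproducts; by the previous step $N\in\mathcal{Y}$, so $\mathcal{Y}\supseteq\langle N\rangle=D(A)$ and the counit is a natural isomorphism. Dually, let $\mathcal{Z}\subseteq D(\mathscr{H}^{\op})$ be the full subcategory of objects $Z$ for which the unit $Z\to FG(Z)$ is an isomorphism; it is triangulated, closed under coproducts, and contains the free module $\mathscr{H}^{\op}$. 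Since $\mathscr{H}^{\op}$ generates $D(\mathscr{H}^{\op})$ as a localizing subcategory, $\mathcal{Z}=D(\mathscr{H}^{\op})$ and the unit is a natural isomorphism as well. Hence $F$ and $G$ are mutually quasi-inverse, giving $D(A)\simeq D(\mathscr{H}^{\op})$.

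The only step that requires genuine care, rather than formal manipulation, is pinning down the bimodule conventions: one must verify that evaluation really defines a compatible right $\mathscr{H}^{\op}$-action on $N$ with the correct Koszul signs, that $\RHom_A(N,-)$ therefore lands in $D(\mathscr{H}^{\op})$ with the module structure claimed, and that the pair $(G,F)$ is the \emph{derived} adjunction, so that it may be computed with ordinary $\Hom$ and $\otimes$ using K-projectivity of $N$ over $A$ and of the free module over $\mathscr{H}^{\op}$. Once those identifications are fixed, checking the unit and counit on $\mathscr{H}$ and $N$ is a direct computation and the rest is the generation argument above; note that all three hypotheses on $N$ are used --- K-projectivity to compute $F$, compactness to make $F$ preserve coproducts, and $\langle N\rangle=D(A)$ to force the counit to be invertible everywhere.
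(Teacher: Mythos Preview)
Your argument is a correct outline of the standard proof of Keller's theorem: set up the adjoint pair $(N\stackrel{L}{\otimes}_{\mathscr{H}^{\op}}-,\ \RHom_A(N,-))$, check unit and counit on the generators $N$ and $\mathscr{H}$, and then propagate using that both functors are triangulated and coproduct-preserving. The three hypotheses are invoked exactly where they must be, and the cautionary remark about fixing the bimodule and sign conventions is well placed.

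The paper, however, does not prove this statement at all: its entire proof reads ``See \cite{Kel}.'' So rather than taking a different route, you have supplied what the paper deliberately omits. Your sketch is in fact close in spirit to Keller's original argument (phrased there in the language of DG categories), so had the paper spelled out a proof it would likely have looked much like yours. One small point worth tightening if you write this out in full: be explicit that $\langle \mathscr{H}^{\op}\rangle = D(\mathscr{H}^{\op})$, i.e.\ that the free module generates the derived category of any DGA as a localizing subcategory; this is standard but is the exact analogue on the $\mathscr{H}^{\op}$ side of the hypothesis $\langle N\rangle = D(A)$, and it is what makes the unit argument go through.
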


\begin{proof}
See \cite{Kel}.
\end{proof}

We are now able to make our first attempt at generalising \cite[Theorem 4.5]{Lad} for DGAs. For this we follow a similar method by introducing a DG-$\Lambda$-module $T=\Sigma i_*X\oplus j_*j^*\Lambda$.

\begin{Theorem}
\label{1st attempt}
Let $X$ be a DG $R$-module such that $_RX$ is compact and $\left\langle _RX\right\rangle=D(R)$. Let $\leftidx{_R}{M}{_S}$ be compact as a DG-$R$-module. Let $\mathscr{E}=\End_\Lambda(P)$, where $P$ is a K-projective resolution of $T=\Sigma i_*X\oplus j_*j^*\Lambda$. Then $\mathscr{E}$ is an DGA with $D(\Lambda)\simeq D(\mathscr{E}^{\op})$.
\end{Theorem}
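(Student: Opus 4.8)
The plan is to verify the hypotheses of Keller's Theorem (Theorem \ref{Keller's Theorem}) for the DGA $A=\Lambda$ and the DG-module $N=P$, a K-projective resolution of $T=\Sigma i_*X\oplus j_*j^*\Lambda$. Once this is done, Keller's Theorem immediately yields $D(\Lambda)\simeq D(\mathscr{E}^{\op})$ with $\mathscr{E}=\End_\Lambda(P)$. So the work is entirely in checking that $P$ (equivalently $T$, up to isomorphism in $D(\Lambda)$) is compact in $D(\Lambda)$ and that $\langle T\rangle = D(\Lambda)$.

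First I would establish compactness. Since compactness is preserved under (de)suspension and finite direct sums, it suffices to show that $i_*X$ and $j_*j^*\Lambda$ are each compact in $D(\Lambda)$. For $i_*X$: the functor $i_*(-)=\leftidx{_\Lambda}{B}{_R}\stackrel{L}{\otimes}_R(-)$ has right adjoint $i^!$ which commutes with coproducts (it equals $\RHom_\Lambda(\leftidx{_\Lambda}{B}{_R},-)$ and $B$ is compact in $D(\Lambda)$ by the first Lemma), so $i_*$ preserves compact objects; since $\leftidx{_R}{X}$ is compact by hypothesis, $i_*X$ is compact. For $j_*j^*\Lambda$: by Lemma \ref{useful facts}(i) we have $j^*\Lambda\cong\leftidx{_S}{S}$ in $D(S)$, so $j_*j^*\Lambda\cong j_*(\leftidx{_S}{S})$, and by Lemma \ref{useful facts}(ii) this is the cone (up to shift) of the map $i_*M=\left[\begin{smallmatrix}M\\0\end{smallmatrix}\right]\to C$ in $D(\Lambda)$ coming from the canonical triangle; here $C$ is compact by the first Lemma and $i_*M=\left[\begin{smallmatrix}M\\0\end{smallmatrix}\right]$ is compact because $\leftidx{_R}{M}$ is compact by hypothesis and $i_*$ preserves compactness. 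A cone of a map between compact objects is compact, so $j_*j^*\Lambda$ is compact.

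Next I would show $\langle T\rangle = D(\Lambda)$. The natural strategy is to use the two triangles of the recollement. From the recollement there is a triangle $i_*i^*\Lambda \to \Lambda \to j_*j^*\Lambda \to$ (or its rotation), so $\Lambda$ lies in the triangulated subcategory generated by $i_*i^*\Lambda$ and $j_*j^*\Lambda$. One checks that $i^*\Lambda \cong \leftidx{_R}{R}$ in $D(R)$ (for instance, $i^*\Lambda = i^*i_*R \oplus i^*j_!S$ using $\Lambda\cong B\oplus C$, and $i^*i_*\cong\id$ while $i^*j_!=0$ from the recollement), hence $i_*i^*\Lambda\cong i_*R\cong B$. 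Since $\langle\leftidx{_R}{X}\rangle=D(R)$ and $i_*$ is a coproduct-preserving triangulated functor, $\langle i_*X\rangle$ contains $i_*R=B=i_*i^*\Lambda$. Therefore $\langle T\rangle\supseteq\langle i_*X\rangle\ni i_*i^*\Lambda$ and $\langle T\rangle\ni j_*j^*\Lambda$, so $\langle T\rangle\ni\Lambda$, whence $\langle T\rangle=D(\Lambda)$.

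Finally, since $P$ is K-projective, $\mathscr{E}=\End_\Lambda(P)$ is a DGA with homology $\Ext^*_\Lambda(P,P)$, and $P\cong T$ in $D(\Lambda)$ so $P$ inherits compactness and the generation property from $T$; Keller's Theorem then applies verbatim with $N=P$, $\mathscr{H}=\mathscr{E}$, giving $D(\Lambda)\simeq D(\mathscr{E}^{\op})$. The main obstacle I anticipate is the generation argument $\langle T\rangle = D(\Lambda)$ --- specifically, correctly identifying $i^*\Lambda$ and $j^*\Lambda$ and assembling the recollement triangle so that $\Lambda$ genuinely falls into $\langle T\rangle$; the compactness checks, by contrast, are routine adjunction arguments once the first Lemma supplies compactness of $B$ and $C$.
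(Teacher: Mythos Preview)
Your overall strategy---verify compactness and generation, then invoke Keller---matches the paper's, and your compactness arguments are essentially identical to the paper's (phrased a bit more abstractly via ``$i^!$ preserves coproducts, hence $i_*$ preserves compacts'').

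There is one error in the generation step. The recollement furnishes two gluing triangles,
\[
j_!j^*A \longrightarrow A \longrightarrow i_*i^*A \longrightarrow\qquad\text{and}\qquad i_*i^!A \longrightarrow A \longrightarrow j_*j^*A \longrightarrow,
\]
so the triangle with $j_*j^*\Lambda$ as third term has first term $i_*i^!\Lambda$, not $i_*i^*\Lambda$; no rotation of either triangle produces the one you wrote. (Concretely: $i_*i^*\Lambda\cong B$, but the cofibre of $B\hookrightarrow\Lambda\cong B\oplus C$ is $C$, not $j_*j^*\Lambda\cong C/\left[\begin{smallmatrix}M\\0\end{smallmatrix}\right]$.) Your computation $i^*\Lambda\cong R$, while correct, is therefore not what is needed. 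The fix is immediate: use the second triangle with $i^!$, and observe that you need not compute $i^!\Lambda$ at all---whatever it is, it lies in $D(R)=\langle X\rangle$, so $i_*i^!\Lambda\in\langle i_*X\rangle\subseteq\langle T\rangle$, and the triangle then gives $\Lambda\in\langle T\rangle$.

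The paper's route to generation is more concrete: it splits $\Lambda\cong B\oplus C$, shows $B=i_*R\in\langle i_*X\rangle$ directly, and handles $C$ via the explicit triangle $\left[\begin{smallmatrix}M\\0\end{smallmatrix}\right]\to C\to C/\left[\begin{smallmatrix}M\\0\end{smallmatrix}\right]$ with $\left[\begin{smallmatrix}M\\0\end{smallmatrix}\right]=i_*M\in\langle i_*X\rangle$ and $C/\left[\begin{smallmatrix}M\\0\end{smallmatrix}\right]\cong j_*j^*\Lambda\in\langle T\rangle$. Your (corrected) recollement-triangle argument packages the same content slightly more cleanly, at the cost of leaving $i^!\Lambda$ unidentified.
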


\begin{proof}
Our aim to to apply Kellers theorem. To do this we need to show that $T$ is compact and that $\left\langle T \right\rangle\cong D(\Lambda)$. We begin with the compactness of $T$.

Since $T$ is a direct sum it is sufficent to show that both its direct summands $i_*X$ and $j_*j^*\Lambda$ are compact.

To show that $i_*X$ is compact we first note that by adjointness $$\Hom_{D(\Lambda)}(i_*X,\coprod A_k)\simeq\Hom_{D(R)}(X,i^!(\coprod A_k))$$ and that $i^!(\coprod A_k)=\RHom_\Lambda(B,\coprod A_k)$.
Also, since $B$ is compact it is not hard to show that $i^!(\coprod A_k)\cong\coprod i^!(A_k)$ in $D(\Lambda)$.

We therefore have that
$$\Hom_{D(\Lambda)}(i_*X,\coprod A_k)\simeq\Hom_{D(R)}(X,i^!(\coprod A_k))$$
$$\cong\Hom_{D(R)}(X,\coprod i^! A_k)\cong\coprod\Hom_{D(R)}(X,i^! A_k)$$
$$\simeq \coprod\Hom_{D(\Lambda)}(i_*X,A_k).$$
So $i_*X$ is compact as required.

\medskip

To show that $j_*j^*\Lambda$ is compact we observe from Lemma \ref{useful facts} that $j_*j^*\Lambda\cong \dfrac{C}{\left[\begin{smallmatrix} M\\0\end{smallmatrix}\right]}$. We know that $C$ is compact and since there is a distinguished triangle $\begin{bmatrix} M\\0\end{bmatrix}\rightarrow C\rightarrow \dfrac{C}{\left[\begin{smallmatrix} M\\0\end{smallmatrix}\right]}$ in $D(\Lambda)$ it is sufficent to show that $\begin{bmatrix} M\\0\end{bmatrix}$ is compact. 

Since $\begin{bmatrix} M\\0\end{bmatrix}\cong i_*M=B\stackrel{L}{\otimes}_RM$ and both $B$ and $M$ are compact we have that 
$$\Hom_{D(\Lambda)}(B\stackrel{L}{\otimes}_R M,\coprod A_k)\cong H^0\RHom_\Lambda(B\stackrel{L}{\otimes}_R M,\coprod A_k)$$
$$\cong H^0\RHom_R(M,\RHom_\Lambda(B,\coprod A_k))\cong H^0\RHom_R(M,\coprod\RHom_\Lambda(B,A_k))$$
$$\cong \coprod H^0\RHom_R(M,\RHom_\Lambda(B,A_k))\cong \coprod H^0\RHom_\Lambda(B\stackrel{L}{\otimes}_R M,A_k)$$
$$\cong \coprod\Hom_{D(\Lambda)}(i_*M,A_k).$$
So $\begin{bmatrix} M\\0\end{bmatrix}$ is compact and since $C$ is also compact we have that $j_*j^*\Lambda\cong\dfrac{C}{\left[\begin{smallmatrix} M\\0\end{smallmatrix}\right]}$ is compact, and so $T=\Sigma i_*X\oplus j_*j^*\Lambda$ is compact.

\bigskip 

It remains to show that $\left\langle T \right\rangle=D(\Lambda)$. For this it is sufficent to show that $\Lambda\in\left\langle T \right\rangle$.

Since $\Lambda\cong B\oplus C$ we only have to show that both $B$ and $C$ are in $\left\langle T\right\rangle$. 

To show that $B$ is contained in $\left\langle T\right\rangle$ we first observe that the functor $i_*(-)$ respects the operations of taking distinguished triangles, set indexed coproducts, quotients and suspensions. This gives us that $i_*(\left\langle X\right\rangle)\subseteq\left\langle i_*(X)\right\rangle$ for all $X\in D(R)$. Hence
$$B=i_*R\in\EssIm(i_*)=i_*(D(R))=i_*\left\langle X\right\rangle\subseteq\left\langle i_*X\right\rangle\subseteq\left\langle T\right\rangle.$$

\medskip

To show that $C\in \langle T\rangle$ we first observe that $\dfrac{C}{\left[\begin{smallmatrix} M\\0\end{smallmatrix}\right]}\cong j_*j^*\Lambda\in\langle T\rangle$ so if we can show that $\begin{bmatrix} M \\0 \end{bmatrix}\in\langle T\rangle$ then $C$ is in $\langle T \rangle$.
To show this we first observe that $\langle X\rangle=D(R)$ so $_RM$ can be built from $X$. Since $i_*$ preserves the possible constructions, we can build $\begin{bmatrix} M\\0\end{bmatrix}=i_*M$ from $i_*X\in\langle T\rangle$.

Hence we have that both $B$ and $C\in \langle T\rangle$ and therefore that $\Lambda\in\langle T\rangle$ so $\langle T\rangle=D(\Lambda)$.

We are now in a position to apply Keller's Therorem to get that $D(\Lambda)\simeq D(\mathscr{E}^{op})$.
\end{proof}

\bigskip

Our aim now is to find a K-projective resolution of $T$ in the above theorem so that we can calculate $\mathscr{E}$. For this we first need the following lemmas.

\bigskip

\begin{Lemma}
\label{K-projective fact}
Let $U$ be a K-projective resolution of a DG-$R$-module $X$. Then $\begin{bmatrix} U\\0 \end{bmatrix}$ is a K-projective resolution of $\begin{bmatrix} X\\0 \end{bmatrix}$ over $\Lambda$.
\end{Lemma}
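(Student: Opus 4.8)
The plan is to verify two things: that $\begin{bmatrix} U\\0 \end{bmatrix}$ is K-projective as a DG-$\Lambda$-module, and that the obvious map $\begin{bmatrix} U\\0 \end{bmatrix}\to\begin{bmatrix} X\\0 \end{bmatrix}$ induced by the quasi-isomorphism $U\to X$ is itself a quasi-isomorphism of DG-$\Lambda$-modules. The second point is the easy one: the functor $i_*$ applied at the level of DG-modules is just $X\mapsto\begin{bmatrix} X\\0 \end{bmatrix}$ (this is the Remark immediately following the recollement), and since the underlying complex of $\begin{bmatrix} X\\0 \end{bmatrix}$ is literally the underlying complex of $X$, a morphism $\begin{bmatrix} U\\0 \end{bmatrix}\to\begin{bmatrix} X\\0 \end{bmatrix}$ is a quasi-isomorphism of DG-$\Lambda$-modules precisely when $U\to X$ is a quasi-isomorphism of DG-$R$-modules. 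So that half reduces to the hypothesis.

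For the K-projectivity, I would argue as follows. By Lemma 3.6 (the first lemma of Section 3), $\Lambda\cong B\oplus C$ as DG-$\Lambda$-modules, so $B=\begin{bmatrix} R\\0 \end{bmatrix}$ is a K-projective DG-$\Lambda$-module. Now observe that the functor $i_*(-)=B\otimes_R^{L}(-)$ at the DG level is $(-)\otimes_R B$-type, or more concretely: $\begin{bmatrix} U\\0 \end{bmatrix}=B\otimes_R U$ where the tensor product is formed using the right-$R$-structure on $B$ coming from $\begin{bmatrix} R\\0 \end{bmatrix}$. Since $U$ is K-projective over $R$, it is built from free DG-$R$-modules via (transfinite) extensions and direct summands — equivalently, $\leftidx{_R}{U}$ lies in the smallest full subcategory of $K(R)$ containing $R$ and closed under coproducts, suspensions, and mapping cones (up to summands). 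The functor $B\otimes_R(-)$ is exact, commutes with coproducts and suspensions, and sends $R$ to $B$; hence it carries K-projectives over $R$ to objects built in the same way from $B$, which are K-projective over $\Lambda$ because $B$ is. So $\begin{bmatrix} U\\0 \end{bmatrix}=B\otimes_R U$ is K-projective over $\Lambda$. Alternatively, and perhaps more cleanly, one checks the defining property directly: for any acyclic DG-$\Lambda$-module $A=\begin{bmatrix} A_1\\A_2 \end{bmatrix}$, one has $\Hom_\Lambda(\begin{bmatrix} U\\0 \end{bmatrix}, A)\cong\Hom_R(U, A_1)$ as complexes (a morphism out of $\begin{bmatrix} U\\0 \end{bmatrix}$ is determined by and can be arbitrary as a morphism of $U$ into the top component, since $e_R$ acts as the identity on $\begin{bmatrix} U\\0 \end{bmatrix}$ and kills the bottom), and $\begin{bmatrix} A_1\\A_2 \end{bmatrix}$ acyclic forces $A_1$ acyclic, so $\Hom_R(U,A_1)$ is acyclic by K-projectivity of $U$ over $R$; this is exactly the statement that $\Hom_{K(\Lambda)}(\begin{bmatrix} U\\0 \end{bmatrix}, A)=0$, i.e. K-projectivity.

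The main obstacle, such as it is, is pinning down the identification $\Hom_\Lambda(\begin{bmatrix} U\\0 \end{bmatrix}, \begin{bmatrix} A_1\\A_2 \end{bmatrix})\cong\Hom_R(U, A_1)$ carefully, tracking differentials and the Koszul signs, and being precise about what "the $R$-structure on the top component" means — here $A_1$ is a DG-$R$-module via the corner embedding $R\hookrightarrow\Lambda$, $r\mapsto\begin{bmatrix} r&0\\0&0 \end{bmatrix}$, and the bottom component $A_2$ receives no contribution because any $\Lambda$-linear map out of $\begin{bmatrix} U\\0 \end{bmatrix}$ lands in $e_R\cdot A=\begin{bmatrix} A_1\\0 \end{bmatrix}$. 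Once that identification is in hand the rest is immediate. I expect this lemma to be short; its role is purely to feed into the construction of the K-projective resolution $P$ of $T$ in the following proposition.
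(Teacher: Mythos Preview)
Your proposal is correct, and your alternative argument is essentially the paper's: the paper writes
\[
\Hom_\Lambda\!\left(\begin{bmatrix} U\\0 \end{bmatrix},J\right)\cong\Hom_\Lambda(B\otimes_RU,J)\cong\Hom_R(U,\Hom_\Lambda(B,J)),
\]
and then invokes K-projectivity of $B$ over $\Lambda$ and of $U$ over $R$ to conclude exactness; your identification $\Hom_\Lambda\!\left(\begin{bmatrix} U\\0 \end{bmatrix}, A\right)\cong\Hom_R(U, e_R A)$ is exactly this adjunction with $\Hom_\Lambda(B,J)=e_R J$ unpacked by hand. Your first argument, pushing K-projectivity through $B\otimes_R(-)$ via a semi-free presentation of $U$, is a valid alternative but longer than necessary here, and the paper does not bother with the quasi-isomorphism half at all, treating it (as you say) as immediate.
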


\begin{proof}
Let $J$ be an exact DG-$\Lambda$-module. Then $$\Hom_\Lambda\left(\begin{bmatrix} U\\0 \end{bmatrix},J\right)\cong\Hom_\Lambda(B\otimes_RU,J)\cong\Hom_R(U,\Hom_\Lambda(B,J)).$$
Since both $U$ and $B$ are K-projective we have that this is exact and hence $\begin{bmatrix} U\\0 \end{bmatrix}$ is K-projective.
\end{proof} 

\bigskip

\begin{Lemma}
\label{K-proj mapping cones}
Let $f:X\rightarrow Y$ be a morphism of K-projective DG-modules over some DGA $R$ and let $Z$ be the mapping cone of $f$. Then $Z$ is also K-projective.
\end{Lemma}

\bigskip

\begin{Remark}
From definition of $_RM_S$ we have a quasi-isomorphism $_RV_S \stackrel{f}{\rightarrow}\leftidx{_R}{M}{_S}$ where $V$ is K-projective over $R$.
Also for the DG-$R$-module $_RX$ we can choose a quasi-isomorphism $\leftidx{_R}{U}\stackrel{g}{\rightarrow}\leftidx{_R}{X}$ where $U$ is a K-projective resolution. 
\end{Remark}

\bigskip

We can now prove the following proposition about the structure of $P$, a K-projective resolution of $T$.

\bigskip

\begin{Proposition}
\label{Structure of P}
Let $T=\Sigma i_*X\oplus j_*j^*\Lambda$ as defined in Theorem \ref{1st attempt}. Then $T$ has K-projective resolution $P=\Sigma\begin{bmatrix} U\\0 \end{bmatrix}\oplus W$ over $\Lambda$ where $W$ is the mapping cone of $\begin{bmatrix} V\\0 \end{bmatrix}\stackrel{\left[\begin{smallmatrix} f\\0 \end{smallmatrix}\right]}{\longrightarrow}\begin{bmatrix} M\\S \end{bmatrix}$.
\end{Proposition}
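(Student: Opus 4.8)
The plan is to use that $T$ is a direct sum: a finite direct sum of K-projective DG-$\Lambda$-modules is K-projective, and a direct sum of quasi-isomorphisms is a quasi-isomorphism, so I would produce a K-projective resolution of each of $\Sigma i_*X$ and $j_*j^*\Lambda$ separately and then add them.

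For the first summand, I would use that $i_*X\cong\begin{bmatrix} X\\0\end{bmatrix}$ in $D(\Lambda)$ (by the description of $i_*$ accompanying the recollement) and that $U$ is a K-projective resolution of $X$ over $R$. Lemma \ref{K-projective fact} then says $\begin{bmatrix} U\\0\end{bmatrix}$ is a K-projective resolution of $\begin{bmatrix} X\\0\end{bmatrix}$ over $\Lambda$, and since $\Sigma$ preserves K-projectivity and quasi-isomorphisms, $\Sigma\begin{bmatrix} U\\0\end{bmatrix}$ is a K-projective resolution of $\Sigma i_*X$.

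For the second summand, I would start from Lemma \ref{useful facts}, which gives $j_*j^*\Lambda\cong j_*({}_SS)\cong C/\left[\begin{smallmatrix} M\\0\end{smallmatrix}\right]$ in $D(\Lambda)$, with $\begin{bmatrix} M\\0\end{bmatrix}$ the DG-$\Lambda$-submodule of $C=\begin{bmatrix} M\\S\end{bmatrix}$. The short exact sequence $0\to\begin{bmatrix} M\\0\end{bmatrix}\to C\to C/\left[\begin{smallmatrix} M\\0\end{smallmatrix}\right]\to 0$ of DG-$\Lambda$-modules --- whose associated distinguished triangle was already used in the proof of Theorem \ref{1st attempt} --- identifies $C/\left[\begin{smallmatrix} M\\0\end{smallmatrix}\right]$, up to quasi-isomorphism, with the mapping cone of the inclusion $\iota\colon\begin{bmatrix} M\\0\end{bmatrix}\hookrightarrow C$. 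Next I would observe that the morphism $\left[\begin{smallmatrix} f\\0\end{smallmatrix}\right]\colon\begin{bmatrix} V\\0\end{bmatrix}\to C$ factors as $\begin{bmatrix} V\\0\end{bmatrix}\stackrel{\left[\begin{smallmatrix} f\\0\end{smallmatrix}\right]}{\longrightarrow}\begin{bmatrix} M\\0\end{bmatrix}\stackrel{\iota}{\longrightarrow}C$, the first arrow being a quasi-isomorphism since $f\colon V\to M$ is one and the lower entry is zero. Comparing the mapping cone $W$ of $\left[\begin{smallmatrix} f\\0\end{smallmatrix}\right]\colon\begin{bmatrix} V\\0\end{bmatrix}\to C$ with the mapping cone of $\iota$ through the evident commutative square and applying the five lemma to the long exact homology sequences of the two cones, I would conclude that the induced morphism from $W$ to the mapping cone of $\iota$ is a quasi-isomorphism; chaining this with the quasi-isomorphism from that cone to $C/\left[\begin{smallmatrix} M\\0\end{smallmatrix}\right]$ and the isomorphisms above gives $W\simeq j_*j^*\Lambda$ in $D(\Lambda)$. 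Finally, $W$ is K-projective by Lemma \ref{K-proj mapping cones}, because $\begin{bmatrix} V\\0\end{bmatrix}$ is K-projective by Lemma \ref{K-projective fact} (as $V$ is K-projective over $R$) and $C$ is K-projective, being a direct summand of $\Lambda$.

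Assembling the pieces, $P=\Sigma\begin{bmatrix} U\\0\end{bmatrix}\oplus W$ is K-projective, and the direct sum of the two quasi-isomorphisms above is a quasi-isomorphism $P\to\Sigma i_*X\oplus j_*j^*\Lambda=T$, so $P$ is the desired K-projective resolution. I expect the main obstacle to be the identification $W\simeq j_*j^*\Lambda$ in $D(\Lambda)$: one has to chain the two isomorphisms of Lemma \ref{useful facts} with the comparison of the mapping cone of $\left[\begin{smallmatrix} f\\0\end{smallmatrix}\right]$ against the cokernel $C/\left[\begin{smallmatrix} M\\0\end{smallmatrix}\right]$ while keeping the directions of all the maps straight; everything else is a direct application of the lemmas already in place.
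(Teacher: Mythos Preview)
Your proposal is correct and follows essentially the same approach as the paper. The only cosmetic difference is in how you obtain the quasi-isomorphism $W\simeq C/\left[\begin{smallmatrix} M\\0\end{smallmatrix}\right]$: the paper writes down the morphism of distinguished triangles in $D(\Lambda)$
\[
\xymatrix{{\begin{bmatrix} M\\0 \end{bmatrix}} \ar[r] & C \ar[r] & {C/\left[\begin{smallmatrix} M\\0 \end{smallmatrix}\right]} \ar[r] & {}\\
{\begin{bmatrix} V\\0 \end{bmatrix}} \ar[u]^{\simeq} \ar[r] & C \ar@{=}[u] \ar[r] & W \ar@{.>}[u] \ar[r] & {}}
\]
and invokes the triangulated fill-in to get the quasi-isomorphism directly, whereas you pass through the intermediate mapping cone of $\iota$ and apply the five lemma to long exact homology sequences; these are the same argument in different clothing.
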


\begin{proof}
By lemma \ref{K-projective fact} we have that $\begin{bmatrix} U\\0 \end{bmatrix}$ is a K-projective resolution of $i_*X=\begin{bmatrix} X\\0 \end{bmatrix}$ and that $\begin{bmatrix} V\\0 \end{bmatrix}$ is a K-projective resolution of $\begin{bmatrix} M\\0 \end{bmatrix}$ over $\Lambda$.

\medskip

We now wish to find a K-projective resolution of $j_*j^*\Lambda$. To do this we first recall that $j_*j^*\Lambda=\dfrac{C}{\left[\begin{smallmatrix}M\\0\end{smallmatrix}\right]}$.

We now consider the map 
$$\begin{bmatrix} V\\0 \end{bmatrix}\stackrel{\left[\begin{smallmatrix} f\\0 \end{smallmatrix}\right]}{\longrightarrow}\begin{bmatrix} M\\S \end{bmatrix}$$ 
of DG-$\Lambda$-modules. This embeds into the distinguished triangle 
$$\begin{bmatrix} V\\0 \end{bmatrix}\stackrel{\left[\begin{smallmatrix} f\\0 \end{smallmatrix}\right]}{\longrightarrow}\begin{bmatrix} M\\S \end{bmatrix}\longrightarrow W.$$ 

We can now use this to obtain the diagram
$$\xymatrix{{\begin{bmatrix} M\\0 \end{bmatrix}} \ar[r] & C \ar[r] & {C/\left[\begin{smallmatrix} M\\0 \end{smallmatrix}\right]} \ar[r] & {}\\
{\begin{bmatrix} V\\0 \end{bmatrix}} \ar[u]^{\simeq} \ar[r] & C \ar@{=}[u] \ar[r] & W \ar@{.>}[u]^{\exists} \ar[r] & {}}$$
of distinguished triangles in $D(\Lambda)$ so there exists a quasi-isomorphism $W\rightarrow {C}/{\left[\begin{smallmatrix} M\\0 \end{smallmatrix}\right]}$.

By Lemma \ref{K-proj mapping cones} we also have that $W$ is K-projective and hence a K-projective resolution of $C/\left[\begin{smallmatrix} M\\0\end{smallmatrix}\right]\cong j_*j^*\Lambda$.

We now have K-projective resolutions for both direct summands of $T$ and hence $T$ has the K-projective resolution, $P=\Sigma\begin{bmatrix} U\\0\end{bmatrix}\oplus W$.

\end{proof}

\bigskip

Now that we have a K-projective resolution for $T$ in Theorem \ref{1st attempt} we can try to calculate the endomorphism DGA $\mathscr{E}=\End_\Lambda(T)$, but before we do so we need a few facts about $W$ as defined in the above propostion.

\bigskip

\begin{Remark}
$W$ is the mapping cone of $\begin{bmatrix}V\\0\end{bmatrix}\stackrel{\left[\begin{smallmatrix} f\\0 \end{smallmatrix}\right]}{\longrightarrow}\begin{bmatrix} M\\S \end{bmatrix}$ so $$W^\natural=\begin{bmatrix}M\\S\end{bmatrix}^\natural\oplus\Sigma\begin{bmatrix}V\\0\end{bmatrix}^\natural$$ i.e any element $w\in W^\natural$ is of the form $w=\begin{bmatrix}\left[\begin{smallmatrix}m\\s\end{smallmatrix}\right]\\\left[\begin{smallmatrix}v\\0\end{smallmatrix}\right]\end{bmatrix}$. In addition $W$ is equipped with the diffential
$$\partial^W=\begin{bmatrix}\partial^C&\left[\begin{smallmatrix}f&0\\0&0\end{smallmatrix}\right]\\[0.3cm]0&-\partial^{\left[\begin{smallmatrix}V\\0\end{smallmatrix}\right]}\end{bmatrix}.$$
and the quasi-ismorphism $W \rightarrow {C}/{\left[\begin{smallmatrix} M\\0 \end{smallmatrix}\right]}$ in the proof of proposition \ref{Structure of P} is give by $$\begin{bmatrix}\left[\begin{smallmatrix}m\\s\end{smallmatrix}\right]\\\left[\begin{smallmatrix}v\\0\end{smallmatrix}\right]\end{bmatrix}\mapsto\overline{\begin{bmatrix}0\\s\end{bmatrix}}.$$
\end{Remark}

\bigskip

\begin{Lemma}
\label{W quasi}
$W$ is isomorphic in $K(\Lambda)$ to $\begin{bmatrix}Z\\S\end{bmatrix}$ where $Z$ is the mapping cone of $f$. Furthermore $Z$ is exact.
\end{Lemma}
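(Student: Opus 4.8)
The plan is to write down an explicit isomorphism of DG-$\Lambda$-modules $\psi\colon W\stackrel{\sim}{\longrightarrow}\begin{bmatrix}Z\\S\end{bmatrix}$; this is stronger than the stated claim and a fortiori yields the isomorphism in $K(\Lambda)$. First one has to make sense of $\begin{bmatrix}Z\\S\end{bmatrix}$ as a DG-$\Lambda$-module. Here $Z$ is the mapping cone of $f\colon V\to M$, so $Z^\natural=M^\natural\oplus\Sigma V^\natural$ with differential $\partial^Z=\begin{bmatrix}\partial^M&f\\0&-\partial^V\end{bmatrix}$, which is a DG-$R$-module; the column $\begin{bmatrix}Z\\S\end{bmatrix}$ is then equipped with the $\Lambda$-structure determined by the canonical inclusion $\iota\colon M\cong M\otimes_S S\to Z$, $m\mapsto\begin{bmatrix}m\\0\end{bmatrix}$. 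Since $\iota$ is a morphism of DG-$R$-modules — it is the middle map of the mapping-cone triangle $V\to M\to Z\to\Sigma V$ — the column $\begin{bmatrix}Z\\S\end{bmatrix}$ is a genuine DG-$\Lambda$-module, with $\begin{bmatrix}r&m'\\0&s'\end{bmatrix}$ acting by $\begin{bmatrix}z\\s\end{bmatrix}\mapsto\begin{bmatrix}rz+\iota(m's)\\s's\end{bmatrix}$.

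Next, using the description of $W$ recalled in the Remark immediately above — $W^\natural=\begin{bmatrix}M\\S\end{bmatrix}^\natural\oplus\Sigma\begin{bmatrix}V\\0\end{bmatrix}^\natural$ together with the differential $\partial^W$ displayed there — I would define $\psi$ by shuffling the two inner entries,
$$\psi\left(\begin{bmatrix}\left[\begin{smallmatrix}m\\s\end{smallmatrix}\right]\\\left[\begin{smallmatrix}v\\0\end{smallmatrix}\right]\end{bmatrix}\right)=\begin{bmatrix}\left[\begin{smallmatrix}m\\v\end{smallmatrix}\right]\\s\end{bmatrix}.$$
This is visibly a bijection of the underlying graded modules, so the content is to check that $\psi$ commutes with the differentials and is $\Lambda$-linear, both of which are short matrix computations using $\partial^C=\begin{bmatrix}\partial^M&0\\0&\partial^S\end{bmatrix}$ and the actions above. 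Applying $\partial^W$ and then $\psi$ to the displayed element produces $\left(\begin{bmatrix}\partial^M m+f(v)\\-\partial^V v\end{bmatrix},\partial^S s\right)$, which is precisely $\partial^{\left[\begin{smallmatrix}Z\\S\end{smallmatrix}\right]}\bigl(\psi(\,\cdot\,)\bigr)$ — in particular the off-diagonal $f$-term of $\partial^W$ becomes the off-diagonal $f$-term of $\partial^Z$ — while $\Lambda$-linearity comes down to the identity $\iota(m's)=\begin{bmatrix}m's\\0\end{bmatrix}$. Hence $\psi$ is an isomorphism of DG-$\Lambda$-modules, so $W\cong\begin{bmatrix}Z\\S\end{bmatrix}$ in $K(\Lambda)$.

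For the final assertion, recall from Definition \ref{UT DG def} and the Remark preceding Proposition \ref{Structure of P} that $f\colon V\to M$ is a quasi-isomorphism. The mapping-cone triangle $V\to M\to Z\to\Sigma V$ gives a long exact homology sequence $\cdots\to H^n(V)\to H^n(M)\to H^n(Z)\to H^{n+1}(V)\to\cdots$, and since $H^n(f)$ is an isomorphism for every $n$ this forces $H^n(Z)=0$ for all $n$; that is, $Z$ is exact. I do not expect a genuine obstacle here: the only points that require attention are the sign conventions in the mapping-cone differential — so that $\psi$ is a chain map on the nose, not merely up to homotopy — and checking that $\iota$ is a legitimate structure map making $\begin{bmatrix}Z\\S\end{bmatrix}$ into a DG-$\Lambda$-module.
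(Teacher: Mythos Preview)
Your argument is correct and in fact proves slightly more than the lemma asks: you obtain an isomorphism of DG-$\Lambda$-modules on the nose, not merely in $K(\Lambda)$. The paper takes a different, more abstract route. It asserts that the sequence
\[
\begin{bmatrix}V\\0\end{bmatrix}\xrightarrow{\left[\begin{smallmatrix}f\\0\end{smallmatrix}\right]}\begin{bmatrix}M\\S\end{bmatrix}\xrightarrow{\left[\begin{smallmatrix}h\\1\end{smallmatrix}\right]}\begin{bmatrix}Z\\S\end{bmatrix}\xrightarrow{\left[\begin{smallmatrix}k\\0\end{smallmatrix}\right]}\Sigma\begin{bmatrix}V\\0\end{bmatrix}
\]
is a distinguished triangle in $K(\Lambda)$, compares it with the defining triangle of $W$, and invokes the triangulated-category axiom that two distinguished triangles sharing the same first morphism have isomorphic third vertices. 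That argument is cleaner to state but leaves implicit the verification that the displayed sequence really is distinguished, which is precisely the matrix computation you carried out when checking that your $\psi$ is a chain map. So your approach is more elementary and self-contained, while the paper's is more conceptual but tacitly relies on the same underlying identification; neither has an advantage in substance. The exactness of $Z$ is handled the same way in both.
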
 

\begin{proof}
Since $Z$ is the mapping cone of $f$ we have a distinguished triangle of the form:
$$\xymatrix{V \ar[r]^f & M\ar[r]^h & Z \ar[r]^k & \Sigma V \ar[r] &{}}.$$

We can use this to construct a distinguished triangle 
$$\xymatrix{{\begin{bmatrix}V\\0\end{bmatrix}} \ar[r]^{\left[\begin{smallmatrix}f\\0\end{smallmatrix}\right]} & {\begin{bmatrix}M\\S\end{bmatrix}}\ar[r]^{\left[\begin{smallmatrix}h\\1\end{smallmatrix}\right]} & {\begin{bmatrix}Z\\S\end{bmatrix}} \ar[r]^{\left[\begin{smallmatrix}k\\0\end{smallmatrix}\right]} & \Sigma{\begin{bmatrix}V\\0\end{bmatrix}} \ar[r] &{}}.$$
	
Which in turn we can use to obtain the diagram of distinguished triangles:
$$\xymatrix{{\begin{bmatrix}V\\0\end{bmatrix}} \ar[r]^{\left[\begin{smallmatrix}f\\0\end{smallmatrix}\right]} \ar@{=}[d] & {\begin{bmatrix}M\\S\end{bmatrix}}\ar[r]^{\left[\begin{smallmatrix}h\\1\end{smallmatrix}\right]} \ar@{=}[d]& {\begin{bmatrix}Z\\S\end{bmatrix}} \ar[r]^{\left[\begin{smallmatrix}k\\0\end{smallmatrix}\right]} \ar@{-->}[d]^{\exists} & \Sigma {\begin{bmatrix}V\\0\end{bmatrix}} \ar[r] \ar@{=}[d]& {}
\\{\begin{bmatrix}V\\0\end{bmatrix}} \ar[r]^{\left[\begin{smallmatrix}f\\0\end{smallmatrix}\right]} & {\begin{bmatrix}M\\S\end{bmatrix}} \ar[r] & W \ar[r] & \Sigma{\begin{bmatrix}V\\0\end{bmatrix}} \ar[r] &  {.}} $$

Hence we have that there exists an isomorphism $\begin{bmatrix}Z\\S\end{bmatrix}\rightarrow W$.

Finally since $Z$ is the mapping cone of a quasi-isomorphism it is exact.
\end{proof}

\bigskip

\begin{Lemma}
\label{Hom equiv}
Let $A$ and $B$ be DG-$R$-modules. Then $$\Hom_R(A,B)\cong\Hom_\Lambda\!\left(\begin{bmatrix}A\\0\end{bmatrix},\begin{bmatrix}B\\0\end{bmatrix}\right)$$ as complexes of abelian groups.

Furthermore $$\Hom_R(A,A)\cong\Hom_\Lambda\!\left(\begin{bmatrix}A\\0\end{bmatrix},\begin{bmatrix}A\\0\end{bmatrix}\right)$$ as DGA's.
\end{Lemma}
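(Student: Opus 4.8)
The plan is to write down the obvious correspondence explicitly and check that it respects all the structure in sight. Recall that the $\Lambda$-action on $\begin{bmatrix}A\\0\end{bmatrix}$ is given by $\begin{bmatrix}r&m\\0&s\end{bmatrix}\cdot\begin{bmatrix}a\\0\end{bmatrix}=\begin{bmatrix}ra\\0\end{bmatrix}$, so it factors through the projection $\Lambda\to R$, $\begin{bmatrix}r&m\\0&s\end{bmatrix}\mapsto r$, and the idempotent $e_R$ acts as the identity. Consequently, given any homogeneous $\Lambda$-linear map $\phi:\begin{bmatrix}A\\0\end{bmatrix}\to\begin{bmatrix}B\\0\end{bmatrix}$, writing $\phi\left(\begin{bmatrix}a\\0\end{bmatrix}\right)=\begin{bmatrix}\psi(a)\\0\end{bmatrix}$ defines a homogeneous additive map $\psi:A\to B$ of the same degree, and $\Lambda$-linearity of $\phi$ forces $\psi(ra)=r\psi(a)$, i.e.\ $\psi\in\Hom_R(A,B)$; conversely every $\psi\in\Hom_R(A,B)$ arises this way. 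First I would record this assignment $\phi\mapsto\psi$ as a graded bijection $\alpha:\Hom_\Lambda\!\left(\begin{bmatrix}A\\0\end{bmatrix},\begin{bmatrix}B\\0\end{bmatrix}\right)\xrightarrow{\ \sim\ }\Hom_R(A,B)$.

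Next I would check that $\alpha$ commutes with the differentials on the two Hom-complexes. The differential of $\begin{bmatrix}A\\0\end{bmatrix}$ sends $\begin{bmatrix}a\\0\end{bmatrix}$ to $\begin{bmatrix}\partial^A a\\0\end{bmatrix}$, and likewise for $\begin{bmatrix}B\\0\end{bmatrix}$; since the Hom-differential of a homogeneous $\phi$ is $\partial(\phi)=\partial\circ\phi-(-1)^{|\phi|}\phi\circ\partial$, applying $\alpha$ to $\partial(\phi)$ yields exactly $\partial^B\circ\psi-(-1)^{|\psi|}\psi\circ\partial^A$, which is the Hom-differential of $\psi=\alpha(\phi)$. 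Hence $\alpha$ is an isomorphism of complexes of abelian groups, which is the first assertion.

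Finally, for the case $A=B$, I would observe that composition is visibly preserved: if $\phi_1,\phi_2$ are $\Lambda$-linear endomorphisms of $\begin{bmatrix}A\\0\end{bmatrix}$ with $\alpha(\phi_i)=\psi_i$, then $(\phi_1\circ\phi_2)\left(\begin{bmatrix}a\\0\end{bmatrix}\right)=\begin{bmatrix}\psi_1\psi_2(a)\\0\end{bmatrix}$, so $\alpha(\phi_1\circ\phi_2)=\psi_1\circ\psi_2$ and $\alpha$ sends $\id$ to $\id$. Together with the previous paragraph this makes $\alpha$ an isomorphism of DGAs $\End_\Lambda\!\left(\begin{bmatrix}A\\0\end{bmatrix}\right)\cong\Hom_R(A,A)$. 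There is essentially no obstacle here: the only point requiring care is the Koszul sign in the Hom-differential, and it matches on the nose precisely because the differential on $\begin{bmatrix}A\\0\end{bmatrix}$ is literally $\partial^A$ placed in the top slot, while composition introduces no new signs, so the DGA identification is immediate.
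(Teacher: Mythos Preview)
Your proof is correct and follows essentially the same approach as the paper: both simply write down the obvious correspondence and note that it respects the differentials and, when $A=B$, composition. The only cosmetic difference is that the paper defines the map in the opposite direction, $\Theta:\Hom_R(A,B)\to\Hom_\Lambda\!\left(\begin{bmatrix}A\\0\end{bmatrix},\begin{bmatrix}B\\0\end{bmatrix}\right)$, $\phi\mapsto\begin{bmatrix}\phi&0\\0&0\end{bmatrix}$, and then asserts without detail that it is an isomorphism of complexes (and of DGAs when $A=B$); your version is actually more explicit than the paper's.
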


\begin{proof}
Define $\Theta:\Hom_R(A,B)\rightarrow\Hom_\Lambda\!\left(\begin{bmatrix}A\\0\end{bmatrix},\begin{bmatrix}B\\0\end{bmatrix}\right)$ by $\Theta(\phi)=\begin{bmatrix}\phi&0\\0&0\end{bmatrix}$. It is easy to see that $\Theta$ is an isomorphism of complexes of abelian groups. In addition in the case $B=A$, $\Theta$ becomes an isomorpism of DGA's. 
\end{proof}

\bigskip

The following proposition give the structure of $\mathscr{E}$ which by Theorem \ref{1st attempt} is dervived equivalent to the upper triangular matix DGA $\Lambda$.

\begin{Proposition}
\label{structure of E}
In the setup of Theorem \ref{1st attempt},
$$\mathscr{E}\cong\begin{bmatrix} \Hom_R(U,U) &  \Hom_\Lambda(W,\Sigma \left[\begin{smallmatrix} U \\0\end{smallmatrix}\right])\\ \Hom_\Lambda(\Sigma \left[\begin{smallmatrix} U \\0\end{smallmatrix}\right],W) & \Hom_\Lambda(W,W)\end{bmatrix}.$$
\end{Proposition}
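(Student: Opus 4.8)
The plan is to compute $\mathscr{E}=\End_\Lambda(P)$ directly from the decomposition $P=\Sigma\begin{bmatrix}U\\0\end{bmatrix}\oplus W$ established in Proposition \ref{Structure of P}. Since $P$ is a direct sum of the two DG-$\Lambda$-modules $\Sigma\begin{bmatrix}U\\0\end{bmatrix}$ and $W$, its endomorphism DGA decomposes as a $2\times 2$ matrix of $\Hom$-complexes in the usual way: an endomorphism of $P$ is a matrix $\begin{bmatrix}\alpha&\beta\\\gamma&\delta\end{bmatrix}$ with $\alpha\in\Hom_\Lambda(\Sigma\left[\begin{smallmatrix}U\\0\end{smallmatrix}\right],\Sigma\left[\begin{smallmatrix}U\\0\end{smallmatrix}\right])$, $\beta\in\Hom_\Lambda(W,\Sigma\left[\begin{smallmatrix}U\\0\end{smallmatrix}\right])$, $\gamma\in\Hom_\Lambda(\Sigma\left[\begin{smallmatrix}U\\0\end{smallmatrix}\right],W)$, $\delta\in\Hom_\Lambda(W,W)$, with composition given by matrix multiplication and differential applied entrywise. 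I would write this out as a first step, being careful about the placement of the arguments (the off-diagonal entries in the endomorphism ring of a direct sum $A\oplus B$ are $\Hom(B,A)$ and $\Hom(A,B)$, which matches the asymmetry in the displayed formula).

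Next I would simplify the corner entries. For the top-left corner, suspension is an automorphism of the homotopy/derived category but one should check what happens at the level of $\Hom$-complexes: $\Hom_\Lambda(\Sigma N,\Sigma N)\cong\Hom_\Lambda(N,N)$ as DGAs, since shifting both source and target by the same amount only conjugates the differential by a sign that cancels. Applying this with $N=\begin{bmatrix}U\\0\end{bmatrix}$ and then invoking Lemma \ref{Hom equiv} (the case $B=A$) gives $\Hom_\Lambda(\Sigma\left[\begin{smallmatrix}U\\0\end{smallmatrix}\right],\Sigma\left[\begin{smallmatrix}U\\0\end{smallmatrix}\right])\cong\Hom_R(U,U)$ as DGAs, which is the claimed $(1,1)$-entry. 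The other three entries are already in the form stated once one unwinds $\mathscr{E}=\End_\Lambda(P)$ with $P$ as above; in particular the $(2,2)$-entry is $\Hom_\Lambda(W,W)$ verbatim, and the off-diagonal entries are $\Hom_\Lambda(W,\Sigma\left[\begin{smallmatrix}U\\0\end{smallmatrix}\right])$ and $\Hom_\Lambda(\Sigma\left[\begin{smallmatrix}U\\0\end{smallmatrix}\right],W)$ respectively.

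Finally I would verify that the matrix ring structure (multiplication, differential, and the bimodule actions of the two corners on the off-diagonal pieces) agrees with the natural DGA structure on $\End_\Lambda(P)$ transported across these identifications — this is where the Koszul signs coming from the shift $\Sigma$ need watching, but it is a mechanical check once the corner isomorphisms are fixed. The main obstacle I anticipate is not any deep point but precisely this sign bookkeeping: one must confirm that the isomorphism $\Hom_\Lambda(\Sigma\left[\begin{smallmatrix}U\\0\end{smallmatrix}\right],\Sigma\left[\begin{smallmatrix}U\\0\end{smallmatrix}\right])\cong\Hom_R(U,U)$ is compatible with how $\Sigma$ interacts with the off-diagonal $\Hom$-complexes, so that the whole $2\times2$ array assembles into an \emph{upper-triangular-style} DGA with the differential respecting the grading. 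Modulo that, the proposition follows immediately from Proposition \ref{Structure of P} and Lemma \ref{Hom equiv}.
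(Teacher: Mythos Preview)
Your proposal is correct and follows essentially the same approach as the paper: use the direct-sum decomposition $P=\Sigma\left[\begin{smallmatrix}U\\0\end{smallmatrix}\right]\oplus W$ from Proposition~\ref{Structure of P} to write $\End_\Lambda(P)$ as a $2\times 2$ matrix of $\Hom$-complexes, then simplify the $(1,1)$-entry via suspension and Lemma~\ref{Hom equiv}. One minor caution: at this stage $\mathscr{E}$ is \emph{not} upper triangular---the $(2,1)$-entry $\Hom_\Lambda(\Sigma\left[\begin{smallmatrix}U\\0\end{smallmatrix}\right],W)$ is genuinely present (it is only shown to be exact later, in Lemma~\ref{Exactness})---so your closing remark about the array assembling into an ``upper-triangular-style'' DGA anticipates more than what this proposition asserts.
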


\begin{proof}
Since $P=\Sigma U\oplus W$ consists of a direct sum we have that 
$$\mathscr{E}=\Hom_\Lambda(P,P)=\begin{bmatrix} \Hom_\Lambda(\Sigma \left[\begin{smallmatrix} U \\0\end{smallmatrix}\right],\Sigma \left[\begin{smallmatrix} U \\0\end{smallmatrix}\right]) &  \Hom_\Lambda(W,\Sigma \left[\begin{smallmatrix} U \\0\end{smallmatrix}\right])\\ \Hom_\Lambda(\Sigma \left[\begin{smallmatrix} U \\0\end{smallmatrix}\right],W) & \Hom_\Lambda(W,W)\end{bmatrix}.$$

\medskip

Furthermore from Lemma \ref{Hom equiv} above we have that $$\Hom_\Lambda\!\left(\Sigma\begin{bmatrix}U\\0\end{bmatrix},\Sigma\begin{bmatrix}U\\0\end{bmatrix}\right)\cong\Hom_\Lambda\!\left(\begin{bmatrix}U\\0\end{bmatrix},\begin{bmatrix}U\\0\end{bmatrix}\right)\cong\Hom_R(U,U).$$
\end{proof}

\bigskip

Our attention now is with obtaining a quasi-isomorphism between the entries of $\mathscr{E}$ and the corresponding entries $\tilde{\Lambda}=\begin{bmatrix}S&\Hom_R(V,U)\\0&\Hom_R(U,U)^{\op}\end{bmatrix}$ which will allow us to construct a isomorpism between the two DGAs.

\begin{Lemma}
\label{Exactness}
The complex of abelian groups $\Hom_\Lambda\!\left(\Sigma\begin{bmatrix}U\\0\end{bmatrix},W\right)$ is exact.
\end{Lemma}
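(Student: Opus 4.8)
The plan is to reduce the exactness of $\Hom_\Lambda(\Sigma\begin{bmatrix}U\\0\end{bmatrix},W)$ to the defining property of a K-projective module, applied to $U$ over $R$, using the description of $W$ provided by Lemma \ref{W quasi}. First I would invoke that lemma: it yields an isomorphism $W\cong\begin{bmatrix}Z\\S\end{bmatrix}$ in $K(\Lambda)$ with $Z$ exact. An isomorphism in $K(\Lambda)$ is a homotopy equivalence, so applying the functor $\Hom_\Lambda(\Sigma\begin{bmatrix}U\\0\end{bmatrix},-)$ gives a homotopy equivalence of complexes of abelian groups
$$\Hom_\Lambda\!\left(\Sigma\begin{bmatrix}U\\0\end{bmatrix},\,W\right)\;\simeq\;\Hom_\Lambda\!\left(\Sigma\begin{bmatrix}U\\0\end{bmatrix},\,\begin{bmatrix}Z\\S\end{bmatrix}\right);$$
a homotopy equivalence is in particular a quasi-isomorphism, so it suffices to show the right-hand complex is exact. (This step uses only that $W$ and $\begin{bmatrix}Z\\S\end{bmatrix}$ are homotopy equivalent, not the K-projectivity of $\begin{bmatrix}U\\0\end{bmatrix}$.)

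Next I would discard the suspension, since $\Hom_\Lambda(\Sigma N,-)\cong\Sigma^{-1}\Hom_\Lambda(N,-)$ and exactness is a shift-invariant property, and then identify $\Hom_\Lambda(\begin{bmatrix}U\\0\end{bmatrix},\begin{bmatrix}Z\\S\end{bmatrix})$ with $\Hom_R(U,Z)$ as a complex of abelian groups. This is the analogue of Lemma \ref{Hom equiv}, the only difference being that the target now has a nonzero lower entry. The point is that $e_S$ annihilates $\begin{bmatrix}U\\0\end{bmatrix}$, so the image of any DG-$\Lambda$-homomorphism out of it is annihilated by $e_S$ and therefore lies in $e_R\begin{bmatrix}Z\\S\end{bmatrix}=\begin{bmatrix}Z\\0\end{bmatrix}$; such a homomorphism is thus the same datum as an $R$-linear map $U\to Z$, compatibly with the differentials. (Equivalently, one may use $\begin{bmatrix}U\\0\end{bmatrix}\cong B\otimes_RU$ and the adjunction $\Hom_\Lambda(B\otimes_RU,-)\cong\Hom_R(U,\Hom_\Lambda(B,-))$ together with $\Hom_\Lambda(B,\begin{bmatrix}Z\\S\end{bmatrix})\cong Z$.) Finally, $U$ is K-projective over $R$ (it is a K-projective resolution of $X$) and $Z$ is exact by Lemma \ref{W quasi}, so $\Hom_R(U,Z)$ is exact by the definition of K-projectivity; reading the identifications backwards gives the claim.

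I do not expect a genuine obstacle here: all the real content has been front-loaded into Lemma \ref{W quasi} (which replaces $W$ by a module whose top row is an exact, K-projectively resolvable object) and into the K-projectivity of $U$. The only place demanding a little care is verifying that $\Hom_\Lambda(\begin{bmatrix}U\\0\end{bmatrix},\begin{bmatrix}Z\\S\end{bmatrix})\cong\Hom_R(U,Z)$ really is an isomorphism of complexes — in particular that no component of such a map can reach the $S$-entry of the target — but this is the same bookkeeping already carried out for Lemma \ref{Hom equiv}.
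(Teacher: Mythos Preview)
Your proposal is correct and follows essentially the same route as the paper: replace $W$ by $\begin{bmatrix}Z\\S\end{bmatrix}$ via Lemma~\ref{W quasi}, observe that any $\Lambda$-map out of $\begin{bmatrix}U\\0\end{bmatrix}$ must land in $\begin{bmatrix}Z\\0\end{bmatrix}$ (the paper checks this with $e_R$, you with $e_S$, to the same effect), identify with $\Hom_R(U,Z)$ as in Lemma~\ref{Hom equiv}, and conclude from $U$ K-projective and $Z$ exact. The only cosmetic difference is that the paper computes $H^i$ degree by degree via $\Hom_{K(\Lambda)}$, whereas you establish the isomorphism of Hom-complexes once and for all; the content is identical.
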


\begin{proof}
For all $i$ we have that 
$$\H^i\Hom_\Lambda\left(\Sigma\begin{bmatrix}U\\0\end{bmatrix},W\right)\cong\H^0\Hom_\Lambda\left(\begin{bmatrix}U\\0\end{bmatrix},\Sigma^{i-1}W\right)$$
$$\cong\Hom_{K(\Lambda)}\left(\begin{bmatrix}U\\0\end{bmatrix},\Sigma^{i-1}W\right)\cong\Hom_{K(\Lambda)}\left(\begin{bmatrix}U\\0\end{bmatrix},\Sigma^{i-1}\begin{bmatrix}Z\\S\end{bmatrix}\right)$$

However for $\theta\in\Hom_\Lambda\left(\begin{bmatrix}U\\0\end{bmatrix},\Sigma^{i-1}\begin{bmatrix}Z\\S\end{bmatrix}\right)$ such that $\theta\left(\begin{bmatrix}u\\0\end{bmatrix}\right)=\begin{bmatrix}z\\s\end{bmatrix}$ for some $u\in U,z\in Z$ and $s\in S$, we have $$\begin{bmatrix}z\\s\end{bmatrix}
=\theta\left(\begin{bmatrix} u \\0\end{bmatrix}\right)
=\theta\left(\begin{bmatrix}1&0\\0&0\end{bmatrix}\begin{bmatrix}u\\0\end{bmatrix}\right)
=\begin{bmatrix}1&0\\0&0\end{bmatrix}\theta\left(\begin{bmatrix}u\\0\end{bmatrix}\right)$$
$$=\begin{bmatrix}1&0\\0&0\end{bmatrix}\begin{bmatrix}z\\s\end{bmatrix}
=\begin{bmatrix}z\\0\end{bmatrix}.$$ 

\smallskip

So $s=0$ and so $\theta\left(\begin{bmatrix}u\\0\end{bmatrix}\right)=\begin{bmatrix}z\\0\end{bmatrix}$. 

Hence $\Hom_\Lambda\!\left(\Sigma\begin{bmatrix}U\\0\end{bmatrix},\Sigma^{i-1}\begin{bmatrix}Z\\S\end{bmatrix}\right)\cong\Hom_\Lambda\!\left(\Sigma\begin{bmatrix}U\\0\end{bmatrix},\Sigma^{i-1}\begin{bmatrix}Z\\0\end{bmatrix}\right)$ and by Lemma \ref{Hom equiv} this is isomorphic to $\Hom_R(\Sigma U,\Sigma^{i-1}Z)$.

\smallskip

Taking this together with $U$ being K-projective and the exactness of $Z$ gives us that 
$$\Hom_{K(\Lambda)}\left(\begin{bmatrix}U\\0\end{bmatrix},\Sigma^{i-1}\begin{bmatrix}Z\\S\end{bmatrix}\right)\cong\Hom_{K(\Lambda)}\left(U,\Sigma^{i-1}Z\right)$$
$$\cong\Hom_{D(\Lambda)}\left(U,\Sigma^{i-1}Z\right)\cong 0,$$

\smallskip

Hence $\H^i\Hom_\Lambda\!\left(\Sigma\begin{bmatrix}U\\0\end{bmatrix},W\right)\cong 0$ for all $i$ and so $\Hom_\Lambda\!\left(\Sigma\begin{bmatrix}U\\0\end{bmatrix},W\right)$ is exact.
\end{proof}

\bigskip

\begin{Proposition}
\label{quasi S^op}
There is a quasi-isomorphism of DGAs $$\alpha:S^{\op}\rightarrow\Hom_\Lambda(W,W).$$
\end{Proposition}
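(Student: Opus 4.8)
The plan is to write $\alpha$ down explicitly as (Koszul‑signed) right multiplication by elements of $S$, for which I first equip $W$ with a DG‑$\Lambda$‑$S$‑bimodule structure, and then to deduce that $\alpha$ is a quasi‑isomorphism by composing with the canonical quasi‑isomorphism $q\colon W\to C/\left[\begin{smallmatrix}M\\0\end{smallmatrix}\right]$ of Proposition \ref{Structure of P} and computing the resulting Hom–complex by hand. For the bimodule structure: by the Remark preceding Lemma \ref{W quasi}, $W^\natural=\begin{bmatrix}M\\S\end{bmatrix}^\natural\oplus\Sigma\begin{bmatrix}V\\0\end{bmatrix}^\natural$, and both $\begin{bmatrix}M\\S\end{bmatrix}$ and $\begin{bmatrix}V\\0\end{bmatrix}$ carry a right $S$‑action inherited from $\leftidx{_R}{M}{_S}$ and $\leftidx{_R}{V}{_S}$ (componentwise, by right multiplication in $S$ on the $S$‑slot). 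Since $f$ is a morphism of $R$‑$S$‑bimodules and every homogeneous element of $C$, resp. of $\left[\begin{smallmatrix}V\\0\end{smallmatrix}\right]$, has all its matrix components in one degree, each of the three constituents $\partial^C$, $\partial^{\left[\begin{smallmatrix}V\\0\end{smallmatrix}\right]}$, $f$ of $\partial^W$ obeys the graded Leibniz rule with respect to this action (the $-$ sign on $\partial^{\left[\begin{smallmatrix}V\\0\end{smallmatrix}\right]}$ in $\partial^W$ absorbing the degree shift of the $\Sigma$‑summand); hence $W$ is a DG‑$\Lambda$‑$S$‑bimodule, so $\partial^W(ws)=(\partial^Ww)s+(-1)^{|w|}w(\partial^Ss)$ and $(\lambda w)s=\lambda(ws)$.

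Next I would define $\alpha\colon S^{\op}\to\Hom_\Lambda(W,W)$ by $\alpha(s)(w)=(-1)^{|s||w|}ws$ and check the three required properties. First, $\alpha(s)$ is a morphism of DG‑$\Lambda$‑modules of degree $|s|$: graded $\Lambda$‑linearity follows at once from $(\lambda w)s=\lambda(ws)$ together with the Koszul sign. Second, $\alpha$ is multiplicative into the \emph{opposite} algebra: since composition in the Hom–DGA is applied right‑to‑left, after collecting Koszul signs $\alpha(s)\circ\alpha(t)$ is the signed right multiplication by $ts$, which is exactly $\alpha$ applied to the product $s\cdot t$ of $S^{\op}$, and $\alpha(1)=\id_W$. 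Third, $\alpha$ is a chain map: by the Leibniz rule, $d_{\Hom}(\alpha(s))(w)=\partial^W(\alpha(s)w)-(-1)^{|s|}\alpha(s)(\partial^Ww)=(-1)^{|s||w|}\bigl(\partial^W(ws)-(\partial^Ww)s\bigr)=(-1)^{|s||w|}(-1)^{|w|}w(\partial^Ss)=\alpha(\partial^Ss)(w)$, and $\partial^{S^{\op}}=\partial^S$. Thus $\alpha$ is a morphism of DGAs.

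To see that $\alpha$ is a quasi‑isomorphism, recall that $q\colon W\to C/\left[\begin{smallmatrix}M\\0\end{smallmatrix}\right]$ is the quasi‑isomorphism given by $\begin{bmatrix}\left[\begin{smallmatrix}m\\s\end{smallmatrix}\right]\\\left[\begin{smallmatrix}v\\0\end{smallmatrix}\right]\end{bmatrix}\mapsto\overline{\begin{bmatrix}0\\s\end{bmatrix}}$. Since $W$ is K‑projective, post‑composition with $q$ is a quasi‑isomorphism of complexes $q_*\colon\Hom_\Lambda(W,W)\to\Hom_\Lambda\!\left(W,C/\left[\begin{smallmatrix}M\\0\end{smallmatrix}\right]\right)$. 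I would then compute the target directly, by the same method as in Lemma \ref{Exactness}: applying $e_R$ shows any $\Lambda$‑linear map $W\to C/\left[\begin{smallmatrix}M\\0\end{smallmatrix}\right]$ kills the summand $\Sigma\left[\begin{smallmatrix}V\\0\end{smallmatrix}\right]$, and such a map is determined by where it sends $\begin{bmatrix}0\\1\end{bmatrix}$, which by applying $e_S$ must be $\overline{\begin{bmatrix}0\\s\end{bmatrix}}$; write $\phi_s$ for this map. Since $\begin{bmatrix}0\\1\end{bmatrix}$ is a cocycle of $W$ one gets $d\phi_s=\phi_{\partial^Ss}$, so $\phi_s\mapsto s$ is an isomorphism of complexes $\Hom_\Lambda\!\left(W,C/\left[\begin{smallmatrix}M\\0\end{smallmatrix}\right]\right)\xrightarrow{\sim}S$. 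Unwinding definitions, $q_*\alpha(s)$ sends $\begin{bmatrix}0\\1\end{bmatrix}$ to $\overline{\begin{bmatrix}0\\s\end{bmatrix}}$, i.e. $q_*\alpha(s)=\phi_s$; hence under this identification $q_*\circ\alpha\colon S^{\op}\to S$ is the identity on underlying complexes. Therefore $q_*\circ\alpha$ induces an isomorphism on cohomology; since $q_*$ does too, so does $\alpha$, i.e. $\alpha$ is a quasi‑isomorphism of DGAs.

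The only genuine work is the sign bookkeeping concentrated in the first two paragraphs: verifying that the componentwise right $S$‑action together with the cone differential $\partial^W$ — in particular its off‑diagonal term $f$ — really makes $W$ a DG‑$\Lambda$‑$S$‑bimodule, and that the single sign $(-1)^{|s||w|}$ in $\alpha(s)(w)=(-1)^{|s||w|}ws$ simultaneously forces graded $\Lambda$‑linearity, compatibility with composition into $S^{\op}$ (rather than $S$), and the chain‑map identity. Once the bimodule structure is in place everything else is routine, the quasi‑isomorphism statement reducing to the short explicit computation of $\Hom_\Lambda\!\left(W,C/\left[\begin{smallmatrix}M\\0\end{smallmatrix}\right]\right)$ above.
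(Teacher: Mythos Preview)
Your proof is correct and follows essentially the same approach as the paper: you define the same map $\alpha$ (your formula $\alpha(s)(w)=(-1)^{|s||w|}ws$ unwinds to exactly the paper's matrix $\begin{bmatrix}g_{\tilde s}&0\\0&l_{\tilde s}\end{bmatrix}$, noting $(-1)^{|\tilde s|(|v|-1)}=(-1)^{|\tilde s|(|v|+1)}$), and you prove it is a quasi-isomorphism by the same trick of post-composing with the K-projective quasi-isomorphism $q\colon W\to C/\left[\begin{smallmatrix}M\\0\end{smallmatrix}\right]$ and identifying $\Hom_\Lambda\bigl(W,C/\left[\begin{smallmatrix}M\\0\end{smallmatrix}\right]\bigr)\cong S$ via evaluation at $\left[\begin{smallmatrix}0\\1\end{smallmatrix}\right]$. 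The only difference is cosmetic: you package the verification that $\alpha$ is a DGA map by first observing that $W$ is a DG-$\Lambda$-$S$-bimodule and then invoking the Leibniz rule once, whereas the paper carries out the same check by an explicit entry-by-entry matrix computation of $\partial^{\Hom_\Lambda(W,W)}(\alpha(\tilde s))$.
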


\begin{proof} 
Define $\alpha:S^{\op}\rightarrow\Hom_\Lambda(W,W)$ by $\alpha(\tilde{s})=\begin{bmatrix} g_{\tilde{s}}&0\\0&l_{\tilde{s}}\end{bmatrix}$ where $g_{\tilde{s}}\left(\begin{bmatrix} m\\s\end{bmatrix}\right)=(-1)^{|\tilde{s}||s|}\begin{bmatrix} m\tilde{s}\\s\tilde{s}\end{bmatrix}$ and $l_{\tilde{s}}\left(\begin{bmatrix} v\\0\end{bmatrix}\right)=(-1)^{|\tilde{s}|(|v|+1)}\begin{bmatrix} v\tilde{s}\\0\end{bmatrix}$. 

We first want to show that $\alpha$ is a homomorphism of Differential Graded Algebras.

It is straightforward to check that $\alpha$ respects the operations of addition and multiplication. So it remains to check that $\alpha$ is compatible with the differential.

So $$\partial^{\Hom_\Lambda(W,W)}(\alpha(\tilde{s}))
=\partial^{\Hom_\Lambda(W,W)}\left(\begin{bmatrix}g_{\tilde{s}}&0\\0&l_{\tilde{s}}\end{bmatrix}\right)$$
$$=\partial^W\begin{bmatrix}g_{\tilde{s}}&0\\0&l_{\tilde{s}}\end{bmatrix}-(-1)^{|\tilde{s}|}\begin{bmatrix}g_{\tilde{s}}&0\\0&l_{\tilde{s}}\end{bmatrix}\partial^W$$
$$=\begin{bmatrix}\partial^C&\left[\begin{smallmatrix}f&0\\0&0\end{smallmatrix}\right]\\[0.3cm]0&-\partial^{\left[\begin{smallmatrix}V\\0\end{smallmatrix}\right]}\end{bmatrix}\begin{bmatrix}g_{\tilde{s}}&0\\0&l_{\tilde{s}}\end{bmatrix}-(-1)^{|\tilde{s}|}\begin{bmatrix}g_{\tilde{s}}&0\\0&l_{\tilde{s}}\end{bmatrix}\begin{bmatrix}\partial^C&\left[\begin{smallmatrix}f&0\\0&0\end{smallmatrix}\right]\\[0.3cm]0&-\partial^{\left[\begin{smallmatrix}V\\0\end{smallmatrix}\right]}\end{bmatrix}$$
$$=\begin{bmatrix}\partial^Cg_{\tilde{s}}&\left[\begin{smallmatrix}f&0\\0&0\end{smallmatrix}\right]l_{\tilde{s}}\\[0.3cm]0&-\partial^{\left[\begin{smallmatrix}V\\0\end{smallmatrix}\right]}l_{\tilde{s}}\end{bmatrix}-(-1)^{|\tilde{s}|}\begin{bmatrix}g_{\tilde{s}}\partial^C&g_{\tilde{s}}\left[\begin{smallmatrix}f&0\\0&0\end{smallmatrix}\right]\\[0.3cm]0&-l_{\tilde{s}}\partial^{\left[\begin{smallmatrix}V\\0\end{smallmatrix}\right]}\end{bmatrix}$$
$$=\begin{bmatrix}\partial^Cg_{\tilde{s}}-(-1)^{|\tilde{s}|}g_{\tilde{s}}\partial^C&\left[\begin{smallmatrix}f&0\\0&0\end{smallmatrix}\right]l_{\tilde{s}}-(-1)^{|\tilde{s}|}g_{\tilde{s}}\left[\begin{smallmatrix}f&0\\0&0\end{smallmatrix}\right]\\[0.3cm]0&-\partial^{\left[\begin{smallmatrix}V\\0\end{smallmatrix}\right]}l_{\tilde{s}}+(-1)^{|\tilde{s}|}l_{\tilde{s}}\partial^{\left[\begin{smallmatrix}V\\0\end{smallmatrix}\right]}\end{bmatrix}.$$

\smallskip

Considering each of the terms in this matrix seperately, starting with the upper left term, we get:
$$\left(\partial^Cg_{\tilde{s}}-(-1)^{|\tilde{s}|}g_{\tilde{s}}\partial^C\right)\left(\begin{bmatrix}m\\s\end{bmatrix}\right)$$
$$=\partial^C\left((-1)^{|\tilde{s}||s|}\begin{bmatrix}m\tilde{s}\\s\tilde{s}\end{bmatrix}\right)-(-1)^{|\tilde{s}|}g_{\tilde{s}}\left(\begin{bmatrix}\partial^M(m)\\\partial^S(s)\end{bmatrix}\right)$$
$$=(-1)^{|\tilde{s}||s|}\begin{bmatrix}\partial^M(m\tilde{s})\\\partial^S(s\tilde{s})\end{bmatrix}-(-1)^{|\tilde{s}|}(-1)^{|\tilde{s}|(|s|-1)}\begin{bmatrix}\partial^M(m)\tilde{s}\\\partial^S(s)\tilde{s}\end{bmatrix}$$
$$=(-1)^{|\tilde{s}||s|}\begin{bmatrix}\partial^M(m)\tilde{s}+(-1)^{|s|}m\partial^S(\tilde{s})\\\partial^S(s)\tilde{s}+(-1)^{|s|}s\partial^S(\tilde{s})\end{bmatrix}-(-1)^{|\tilde{s}||s|}\begin{bmatrix}\partial^M(m)\tilde{s}\\\partial^S(s)\tilde{s}\end{bmatrix}$$
$$=(-1)^{|\tilde{s}||s|}\begin{bmatrix}(-1)^{|s|}m\partial^S(\tilde{s})\\(-1)^{|s|}s\partial^S(\tilde{s})\end{bmatrix}=(-1)^{|\partial^S(\tilde{s})||s|}\begin{bmatrix}m\partial^S(\tilde{s})\\s\partial^S(\tilde{s})\end{bmatrix}$$
$$=g_{\partial^S(\tilde{s})}\left(\begin{bmatrix}m\\s\end{bmatrix}\right).$$

So $\partial^Cg_{\tilde{s}}-(-1)^{|\tilde{s}|}g_{\tilde{s}}\partial^C=g_{\partial^S(\tilde{s})}$.

\smallskip

By a similar arguement we also have for the lower right entry that $-\partial^{\left[\begin{smallmatrix}V\\0\end{smallmatrix}\right]}l_{\tilde{s}}+(-1)^{|\tilde{s}|}l_{\tilde{s}}\partial^{\left[\begin{smallmatrix}V\\0\end{smallmatrix}\right]}=l_{\partial^S(\tilde{s})}$.

\smallskip

Finally for the upper right entry we have:
$$\left(\begin{bmatrix}f&0\\0&0\end{bmatrix}l_{\tilde{s}}-(-1)^{|\tilde{s}|}g_{\tilde{s}}\begin{bmatrix}f&0\\0&0\end{bmatrix}\right)\left(\begin{bmatrix}v\\0\end{bmatrix}\right)$$
$$=\begin{bmatrix}f&0\\0&0\end{bmatrix}\left((-1)^{|\tilde{s}|(|v|+1)}\begin{bmatrix}v\tilde{s}\\0\end{bmatrix}\right)-(-1)^{|\tilde{s}|}g_{\tilde{s}}\left(\begin{bmatrix}f(v)\\0\end{bmatrix}\right)$$
$$(-1)^{|\tilde{s}|(|v|+1)}\begin{bmatrix}f(v\tilde{s})\\0\end{bmatrix}-(-1)^{|\tilde{s}|}(-1)^{|\tilde{s}||v|}\begin{bmatrix}f(v)\tilde{s}\\0\end{bmatrix}$$
$$(-1)^{|\tilde{s}|(|v|+1)}\begin{bmatrix}f(v)\tilde{s}\\0\end{bmatrix}-(-1)^{|\tilde{s}|(|v|+1)}\begin{bmatrix}f(v)\tilde{s}\\0\end{bmatrix}=0.$$

Substituting these values back into the matrix gives us that:
$$\partial^{\Hom_\Lambda(W,W)}(\alpha(\tilde{s}))=\begin{bmatrix}g_{\partial^S(\tilde{s})}&0\\0&l_{\partial^S(\tilde{s})}\end{bmatrix}=\alpha(\partial^S(\tilde{s})).$$

Hence we have that $\alpha$ is a homomorphism of Differential Graded Algebras. It remains to show that it is also a quasi-isomorphism.

\medskip

Now let $\theta\in\Hom_\Lambda\!\left(W,C/\left[\begin{smallmatrix}M\\0\end{smallmatrix}\right]\right)$ such that $\theta\left(\begin{bmatrix}\left[\begin{smallmatrix}m\\s\end{smallmatrix}\right]\\\left[\begin{smallmatrix}v\\0\end{smallmatrix}\right]\end{bmatrix}\right)=\overline{\begin{bmatrix}0\\s\end{bmatrix}}$. This quasi-isomorphism gives us the homomorphism of complexes of abelian groups $$\Hom_\Lambda(W,\theta):\Hom_\Lambda(W,W)\rightarrow\Hom_\Lambda\left(W,C/\left[\begin{smallmatrix}M\\0\end{smallmatrix}\right]\right).$$

Define $\beta=\Hom_\Lambda(W,\theta)\circ\alpha:S^{\op}\rightarrow\Hom_\Lambda\left(W,C/\left[\begin{smallmatrix}M\\0\end{smallmatrix}\right]\right)$. 
Then $\beta$ is a homomorphism of complexes of abelian groups with
$$\beta(\tilde{s})\left(\begin{bmatrix}\left[\begin{smallmatrix}m\\s\end{smallmatrix}\right]\\\left[\begin{smallmatrix}v\\0\end{smallmatrix}\right]\end{bmatrix}\right)
=\theta\left((-1)^{|s||\tilde{s}|}\begin{bmatrix}\left[\begin{smallmatrix}m\tilde{s}\\s\tilde{s}\end{smallmatrix}\right]\\\left[\begin{smallmatrix}v\\0\end{smallmatrix}\right]\end{bmatrix}\right)
=(-1)^{|s||\tilde{s}|}\overline{\begin{bmatrix}0\\s\tilde{s}\end{bmatrix}}.$$

Now let $\psi\in \Hom_{\Lambda}\left(W,C/\left[\begin{smallmatrix}M\\0\end{smallmatrix}\right]\right)$. Then for any $m\in M, s\in S$ and $v\in V$ we have that $\psi\left(\begin{bmatrix}\left[\begin{smallmatrix}m\\s\end{smallmatrix}\right]\\\left[\begin{smallmatrix}v\\0\end{smallmatrix}\right]\end{bmatrix}\right)
=\overline{\begin{bmatrix}0\\\tilde{s}\end{bmatrix}}$ for some $\tilde{s}\in S$. Hence we have $$\psi\left(\begin{bmatrix}\left[\begin{smallmatrix}m\\s\end{smallmatrix}\right]\\\left[\begin{smallmatrix}v\\0\end{smallmatrix}\right]\end{bmatrix}\right)
=\overline{\begin{bmatrix}0\\\tilde{s}\end{bmatrix}}
=\begin{bmatrix}0&0\\0&1\end{bmatrix}\overline{\begin{bmatrix}0\\\tilde{s}\end{bmatrix}}
=\begin{bmatrix}0&0\\0&1\end{bmatrix}\psi\left(\begin{bmatrix}\left[\begin{smallmatrix}m\\s\end{smallmatrix}\right]\\\left[\begin{smallmatrix}v\\0\end{smallmatrix}\right]\end{bmatrix}\right)=\psi\left(\begin{bmatrix}\left[\begin{smallmatrix}0\\s\end{smallmatrix}\right]\\\left[\begin{smallmatrix}0\\0\end{smallmatrix}\right]\end{bmatrix}\right).$$ 

Furthermore  
$$\psi\left(\begin{bmatrix}\left[\begin{smallmatrix}m\\s\end{smallmatrix}\right]\\\left[\begin{smallmatrix}v\\0\end{smallmatrix}\right]\end{bmatrix}\right)
=\psi\left(\begin{bmatrix}\left[\begin{smallmatrix}0\\s\end{smallmatrix}\right]\\\left[\begin{smallmatrix}0\\0\end{smallmatrix}\right]\end{bmatrix}\right)
=\begin{bmatrix}0&0\\0&s\end{bmatrix}\psi\left(\begin{bmatrix}\left[\begin{smallmatrix}0\\1\end{smallmatrix}\right]\\\left[\begin{smallmatrix}0\\0\end{smallmatrix}\right]\end{bmatrix}\right).$$ 

So each element of $\Hom_{\Lambda}\left(W,C/\left[\begin{smallmatrix}M\\0\end{smallmatrix}\right]\right)$ depends entirely on where it sends $\begin{bmatrix}\left[\begin{smallmatrix}0\\1\end{smallmatrix}\right]\\\left[\begin{smallmatrix}0\\0\end{smallmatrix}\right]\end{bmatrix}$.

\medskip

We therefore have that for every $s\in S$ we have that $\beta(s)$ is the element of $\Hom_{\Lambda}\left(W,C/\left[\begin{smallmatrix}M\\0\end{smallmatrix}\right]\right)$ which sends $\begin{bmatrix}\left[\begin{smallmatrix}0\\1\end{smallmatrix}\right]\\\left[\begin{smallmatrix}0\\0\end{smallmatrix}\right]\end{bmatrix}$ 
to $\overline{\begin{bmatrix}0\\s\end{bmatrix}}$.

Since elements of $\Hom_{\Lambda}\left(W,C/\left[\begin{smallmatrix}M\\0\end{smallmatrix}\right]\right)$ depend entirely on where they send $\begin{bmatrix}\left[\begin{smallmatrix}0\\1\end{smallmatrix}\right]\\\left[\begin{smallmatrix}0\\0\end{smallmatrix}\right]\end{bmatrix}$ 
and since $\overline{\left[\begin{matrix}0\\s\end{matrix}\right]}\neq\overline{\left[\begin{matrix}0\\s'\end{matrix}\right]}$ for all $s,s'\in S$ with $s\neq s'$ we have that $\beta$ is a bijection and so a isomorphism of complexes of abelian groups.

Furthermore since $W$ is K-projective and $\theta$ is a quasi-isomorphism we have that $\Hom_\Lambda(W,\theta)$ is a quasi-isomorphism and therefore since $\beta$ is an isomorphism we have that $\alpha$ must also be a quasi-isomorphism. 
\end{proof}

\bigskip

\begin{Lemma}
\label{Hom iso2}
There exists a quasi-isomorphism $\Psi:\Hom_R(V,U)\rightarrow\Hom_\Lambda\left(W,\Sigma\begin{bmatrix}U\\0\end{bmatrix}\right)$ of complexes of abelian groups, such that $$\Psi(\theta)\left(\begin{bmatrix}\left[\begin{smallmatrix}m\\s\end{smallmatrix}\right]\\\left[\begin{smallmatrix}v\\0\end{smallmatrix}\right]\end{bmatrix}\right)=(-1)^{|\theta|}\begin{bmatrix}\theta(v)\\0\end{bmatrix}.$$
\end{Lemma}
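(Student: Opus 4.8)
The plan is to define $\Psi$ by the displayed formula and then verify, in order, that (a) $\Psi(\theta)$ is a morphism of DG-$\Lambda$-modules for each homogeneous $\theta$, (b) $\Psi$ commutes with the differentials, and (c) $\Psi$ is bijective; since a bijective morphism of complexes of abelian groups is automatically a quasi-isomorphism, this will finish the proof. For (a), recall the explicit description of $W$ (recorded just before Lemma~\ref{W quasi}): as a graded module $W^\natural=\left[\begin{smallmatrix}M\\S\end{smallmatrix}\right]^\natural\oplus\Sigma\left[\begin{smallmatrix}V\\0\end{smallmatrix}\right]^\natural$, with the evident summandwise $\Lambda$-action, and differential $\partial^W$ having diagonal entries $\partial^C$ and $-\partial^{\left[\begin{smallmatrix}V\\0\end{smallmatrix}\right]}$ and upper-right entry $\left[\begin{smallmatrix}f&0\\0&0\end{smallmatrix}\right]$. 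Since $\Psi(\theta)$ ignores the $\left[\begin{smallmatrix}M\\S\end{smallmatrix}\right]$-coordinate of an element of $W$, $\Lambda$-linearity of $\Psi(\theta)$ reduces to $R$-linearity of $\theta$ together with the fact (used repeatedly in the paper) that $e_S$ annihilates $\left[\begin{smallmatrix}U\\0\end{smallmatrix}\right]$ and hence $\Sigma\left[\begin{smallmatrix}U\\0\end{smallmatrix}\right]$; the Koszul sign $(-1)^{|\theta|}$ is inserted precisely so that $|\Psi(\theta)|=|\theta|$.

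The heart of the proof is (b). Here I would evaluate both $\Psi(\partial\theta)$ and $\partial^{\Hom}\big(\Psi(\theta)\big)$, where $\partial^{\Hom}(\psi)=\partial\circ\psi-(-1)^{|\psi|}\psi\circ\partial$, on a general element of $W$ and compare. Because $\Psi(\theta)$ kills the $\left[\begin{smallmatrix}M\\S\end{smallmatrix}\right]$-summand, the off-diagonal entry $\left[\begin{smallmatrix}f&0\\0&0\end{smallmatrix}\right]$ of $\partial^W$ never contributes, so what remains is the interplay of the shifted differential $-\partial^{\left[\begin{smallmatrix}V\\0\end{smallmatrix}\right]}$ inside $\partial^W$, the suspension differential on $\Sigma\left[\begin{smallmatrix}U\\0\end{smallmatrix}\right]$ (which is $-1$ times that of $\left[\begin{smallmatrix}U\\0\end{smallmatrix}\right]$), and the two occurrences of $(-1)^{|\theta|}$; a short sign chase shows these cancel so that $\Psi(\partial\theta)=\partial^{\Hom}\big(\Psi(\theta)\big)$. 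This sign bookkeeping is the step most likely to cause trouble, although it is routine once the suspension and Hom-complex conventions of Section~2 are pinned down; note in particular that it does not even require $f$ to be a chain map.

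For (c) I would use the $e_S$-argument, exactly as in the proofs of Lemmas~\ref{Exactness} and~\ref{W quasi}. The copy of $C=\left[\begin{smallmatrix}M\\S\end{smallmatrix}\right]$ sitting inside $W$ is a DG-$\Lambda$-submodule generated by $e_S$-multiples, so every $\psi\in\Hom_\Lambda\big(W,\Sigma\left[\begin{smallmatrix}U\\0\end{smallmatrix}\right]\big)$ vanishes on it; hence the value of $\psi$ on an element of $W$ depends only on its $\Sigma\left[\begin{smallmatrix}V\\0\end{smallmatrix}\right]$-coordinate $\left[\begin{smallmatrix}v\\0\end{smallmatrix}\right]$, and writing that value as $(-1)^{|\psi|}\left[\begin{smallmatrix}\theta_\psi(v)\\0\end{smallmatrix}\right]$ yields a well-defined $\theta_\psi\in\Hom_R(V,U)$, the $R$-linearity of $\theta_\psi$ following from $\Lambda$-linearity of $\psi$ on matrices $\left[\begin{smallmatrix}r&0\\0&0\end{smallmatrix}\right]$. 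The assignments $\theta\mapsto\Psi(\theta)$ and $\psi\mapsto\theta_\psi$ are mutually inverse, so $\Psi$ is an isomorphism of complexes of abelian groups, and in particular a quasi-isomorphism. Equivalently, one reaches the same conclusion more structurally by applying $\Hom_\Lambda\big(-,\Sigma\left[\begin{smallmatrix}U\\0\end{smallmatrix}\right]\big)$ to the degreewise-split short exact sequence $0\to C\to W\to\Sigma\left[\begin{smallmatrix}V\\0\end{smallmatrix}\right]\to 0$, noting that $\Hom_\Lambda\big(C,\Sigma\left[\begin{smallmatrix}U\\0\end{smallmatrix}\right]\big)=0$ by the $e_S$-argument, and then identifying $\Hom_\Lambda\big(\Sigma\left[\begin{smallmatrix}V\\0\end{smallmatrix}\right],\Sigma\left[\begin{smallmatrix}U\\0\end{smallmatrix}\right]\big)\cong\Hom_R(V,U)$ via Lemma~\ref{Hom equiv} and the suspension isomorphism.
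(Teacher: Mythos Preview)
Your proof is correct, and in fact you prove slightly more than the paper does: you show $\Psi$ is an \emph{isomorphism} of complexes, whereas the paper only claims a quasi-isomorphism. The paper takes what is essentially your ``structural alternative'' as its primary route: it applies $\Hom_\Lambda\big(-,\Sigma\left[\begin{smallmatrix}U\\0\end{smallmatrix}\right]\big)$ to the distinguished triangle $\left[\begin{smallmatrix}V\\0\end{smallmatrix}\right]\to C\to W\to\Sigma\left[\begin{smallmatrix}V\\0\end{smallmatrix}\right]$ in $K(\Lambda)$, kills the $C$-terms by the $e_S$-argument, and then \emph{defines} $\Psi$ as the composite $\pi^*\circ\Sigma(-)\circ\Theta$, only afterwards computing the explicit formula. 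Your primary approach runs in the opposite direction: take the formula as given, check it is a chain map (your step (b)), and exhibit an explicit inverse via the $e_S$-argument. The advantage of the paper's route is that one never has to do the sign chase of (b), since compatibility with differentials is automatic for a composite of chain maps; the advantage of yours is that bijectivity drops out immediately at the graded level without passing through a triangulated-category argument. One minor remark: your aside that ``the Koszul sign $(-1)^{|\theta|}$ is inserted precisely so that $|\Psi(\theta)|=|\theta|$'' is not quite right, since signs do not affect degree; the sign is there to make (b) work, compensating for the suspension in the target.
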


\begin{proof}
Consider the distinguished triangle $$\begin{bmatrix}V\\0\end{bmatrix}\stackrel{\left[\begin{smallmatrix}f\\0\end{smallmatrix}\right]}{\rightarrow}\begin{bmatrix}M\\S\end{bmatrix}\stackrel{\iota}{\rightarrow}W\stackrel{\pi}{\rightarrow}\Sigma\begin{bmatrix}V\\0\end{bmatrix}\rightarrow$$
in $K(\Lambda)$.

Since $W$ is the mapping cone of $\begin{bmatrix}f\\0\end{bmatrix}$ we have that $\pi$ is given by $\pi\left(\begin{bmatrix}\left[\begin{smallmatrix}m\\s\end{smallmatrix}\right]\\\left[\begin{smallmatrix}v\\0\end{smallmatrix}\right]\end{bmatrix}\right)=\begin{bmatrix}v\\0\end{bmatrix}$.

By applying the functor $\Hom_\Lambda\left(-,\Sigma\begin{bmatrix}U\\0\end{bmatrix}\right)$ we get a distinguished triangle

$$\leftarrow\Hom_\Lambda\left(\begin{bmatrix}M\\S\end{bmatrix},\Sigma\begin{bmatrix}U\\0\end{bmatrix}\right)
\leftarrow\Hom_\Lambda\left(W,\Sigma\begin{bmatrix}U\\0\end{bmatrix}\right)$$
$$\stackrel{\pi^*}{\leftarrow}\Hom_\Lambda\left(\Sigma\begin{bmatrix}V\\0\end{bmatrix},\Sigma\begin{bmatrix}U\\0\end{bmatrix}\right)
\leftarrow\Hom_\Lambda\left(\Sigma\begin{bmatrix}M\\S\end{bmatrix},\Sigma\begin{bmatrix}U\\0\end{bmatrix}\right)$$
in $K(\Ab)$.

Now let $\theta\in\Hom_\Lambda\left(\begin{bmatrix}M\\S\end{bmatrix},\Sigma^i\begin{bmatrix}U\\0\end{bmatrix}\right)$. Then, since $\begin{bmatrix}M\\S\end{bmatrix}$ is generated by $\begin{bmatrix}0\\1\end{bmatrix}$ as a DG-$\Lambda$-module, we have that $\theta$ depends entirely upon where it sends $\begin{bmatrix}0\\1\end{bmatrix}$.

Let $\theta\left(\begin{bmatrix}0\\1\end{bmatrix}\right)=\begin{bmatrix}u\\0\end{bmatrix}$. Then

$$\begin{bmatrix}u\\0\end{bmatrix}
=\theta\left(\begin{bmatrix}0\\1\end{bmatrix}\right)
=\theta\left(\begin{bmatrix}0&0\\0&1\end{bmatrix}\begin{bmatrix}0\\1\end{bmatrix}\right)
=\begin{bmatrix}0&0\\0&1\end{bmatrix}\theta\left(\begin{bmatrix}0\\1\end{bmatrix}\right)
=\begin{bmatrix}0&0\\0&1\end{bmatrix}\begin{bmatrix}u\\0\end{bmatrix}=0,$$

so $\theta=0$ and hence $\Hom_\Lambda\left(\begin{bmatrix}M\\S\end{bmatrix},\Sigma^i\begin{bmatrix}U\\0\end{bmatrix}\right)=0$ for all $i$.

Hence the distinguished triangle above shows that
$$\pi^*:\Hom_\Lambda\left(\Sigma\begin{bmatrix}V\\0\end{bmatrix},\Sigma\begin{bmatrix}U\\0\end{bmatrix}\right)\rightarrow\Hom_\Lambda\left(W,\Sigma\begin{bmatrix}U\\0\end{bmatrix}\right)$$
is a quasi-isomorphism.

We can now use this along with the suspension $\Sigma$ and the isomorphism $\Theta$ defined in the proof of Lemma \ref{Hom equiv} to obtain the diagram
$$\Hom_R(V,U)\stackrel{\Theta}{\longrightarrow}\Hom_{\Lambda}\left(\begin{bmatrix}V\\0\end{bmatrix},\begin{bmatrix}U\\0\end{bmatrix}\right)\stackrel{\Sigma(-)}{\longrightarrow}$$ $$\Hom_{\Lambda}\left(\Sigma\begin{bmatrix}V\\0\end{bmatrix},\Sigma\begin{bmatrix}U\\0\end{bmatrix}\right)\stackrel{\pi^*}{\longrightarrow}\Hom_{\Lambda}\left(W,\Sigma\begin{bmatrix}U\\0\end{bmatrix}\right)$$

Since each of the maps in the diagram is a quasi-isomorphism we can use them to define the quasi-isomorphism $\Psi:\Hom_R(V,U)\rightarrow\Hom_\Lambda\!\left(W,\Sigma\begin{bmatrix}U\\0\end{bmatrix}\right)$ by the composition $$\Psi=\pi^*\circ\Sigma(-)\circ\Theta.$$

Finally for $\theta\in\Hom_R(V,U)$ we have that
$$\Psi(\theta)=\pi^*\circ\Sigma\circ\Theta(\theta)$$
$$=\pi^*\Sigma\left(\begin{bmatrix}\theta&0\\0&0\end{bmatrix}\right)$$
$$=\pi^*\left((-1)^{|\theta|}\begin{bmatrix}\theta&0\\0&0\end{bmatrix}\right)$$
$$=(-1)^{|\theta|}\begin{bmatrix}\theta&0\\0&0\end{bmatrix}\circ\pi.$$

So for $\begin{bmatrix}\left[\begin{smallmatrix}m\\s\end{smallmatrix}\right]\\\left[\begin{smallmatrix}v\\0\end{smallmatrix}\right]\end{bmatrix}\in W$, we have that 
$$\Psi(\theta)\left(\begin{bmatrix}\left[\begin{smallmatrix}m\\s\end{smallmatrix}\right]\\\left[\begin{smallmatrix}v\\0\end{smallmatrix}\right]\end{bmatrix}\right)
=(-1)^{|\theta|}\begin{bmatrix}\theta&0\\0&0\end{bmatrix}\circ\pi\left(\begin{bmatrix}\left[\begin{smallmatrix}m\\s\end{smallmatrix}\right]\\\left[\begin{smallmatrix}v\\0\end{smallmatrix}\right]\end{bmatrix}\right)$$
$$=(-1)^{|\theta|}\begin{bmatrix}\theta&0\\0&0\end{bmatrix}\left(\begin{bmatrix}v\\0\end{bmatrix}\right)
=(-1)^{|\theta|}\begin{bmatrix}\theta(v)\\0\end{bmatrix}.$$

\end{proof}

\bigskip

\begin{Remark}
From the right DG-$S$-module structure on $V$ we have that $\Hom_R(V,U)$ is a left DG-$S$-module. In addition $\Hom_R(V,U)$ is a left DG-$\Hom_R(U,U)$-module.
Hence we have that $\begin{bmatrix}S&\Hom_R(V,U)\\0&\Hom_R(U,U)^{\op}\end{bmatrix}$ is a DGA. 
\end{Remark}

\bigskip

We are now in a position to produce our main Theorem, a version of \cite[Theorem 4.5]{Lad} for DGAs. 

\begin{Theorem}
\label{Main}
Let $X$ be a DG $R$-module such that $_RX$ is compact with $\left\langle _RX\right\rangle=D(R)$ and let $\leftidx{_R}{M}{_S}$ be compact as a DG-$R$-module. 
Then for the upper triangular differential graded algebras  
$$\Lambda=\begin{bmatrix}R&M\\0&S\end{bmatrix}\textrm{ and  } \tilde{\Lambda}=\begin{bmatrix}S&\Hom_R(V,U)\\0&\Hom_R(U,U)^{\op}\end{bmatrix}$$
we have that $D(\Lambda)\simeq D(\tilde{\Lambda})$.
\end{Theorem}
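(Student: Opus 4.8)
The plan is to combine Theorem~\ref{1st attempt} with an explicit quasi-isomorphism of DGAs realising $\mathscr{E}^{\op}$ as $\tilde{\Lambda}$. By Theorem~\ref{1st attempt} we already have $D(\Lambda)\simeq D(\mathscr{E}^{\op})$, where $\mathscr{E}=\End_\Lambda(P)$ with $P=\Sigma\left[\begin{smallmatrix}U\\0\end{smallmatrix}\right]\oplus W$ the K-projective resolution of $T$ from Proposition~\ref{Structure of P}, and by Proposition~\ref{structure of E} this $\mathscr{E}$ is, up to isomorphism, the $2\times 2$ ``matrix'' DGA whose entries are the $\Hom$-complexes between the two summands of $P$. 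Since a quasi-isomorphism of DGAs induces an equivalence of derived categories, and since a quasi-isomorphism $\Gamma\to\mathscr{E}$ of DGAs is simultaneously one between $\Gamma^{\op}$ and $\mathscr{E}^{\op}$ (it is the same degree-preserving map of underlying complexes), it suffices to produce a DGA $\Gamma$ with a quasi-isomorphism of DGAs $\Gamma\to\mathscr{E}$ and an isomorphism $\Gamma^{\op}\cong\tilde{\Lambda}$; then $D(\Lambda)\simeq D(\mathscr{E}^{\op})\simeq D(\Gamma^{\op})=D(\tilde{\Lambda})$.

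Accordingly, first I would set
$$\Gamma=\begin{bmatrix}\Hom_R(U,U)&\Hom_R(V,U)\\0&S^{\op}\end{bmatrix},$$
which is a DGA by the Remark preceding the theorem. In the notation of Proposition~\ref{structure of E}, I then define $\Phi:\Gamma\to\mathscr{E}$ componentwise: the identity on the $(1,1)$ entry $\Hom_R(U,U)$; the map $\alpha:S^{\op}\to\Hom_\Lambda(W,W)$ of Proposition~\ref{quasi S^op} on the $(2,2)$ entry; the map $\Psi:\Hom_R(V,U)\to\Hom_\Lambda\!\left(W,\Sigma\left[\begin{smallmatrix}U\\0\end{smallmatrix}\right]\right)$ of Lemma~\ref{Hom iso2} on the $(1,2)$ entry; and the zero map $0\to\Hom_\Lambda\!\left(\Sigma\left[\begin{smallmatrix}U\\0\end{smallmatrix}\right],W\right)$ on the $(2,1)$ entry. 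Each component is a quasi-isomorphism of complexes of abelian groups --- the $(1,1)$ trivially, the $(1,2)$ and $(2,2)$ by the cited results, and the $(2,1)$ because its target is exact by Lemma~\ref{Exactness} --- so $\Phi$ is a quasi-isomorphism of the underlying complexes.

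The substantive point, and the step I expect to be the main obstacle, is to verify that $\Phi$ is a morphism of DGAs, i.e.\ that it intertwines the two matrix multiplications; this amounts to the compatibility of $\alpha$ and $\Psi$ with the relevant bimodule actions. One must check that the left $\Hom_R(U,U)$-action on $\Hom_R(V,U)$ by post-composition corresponds, via the $(1,1)$- and $(1,2)$-components of $\Phi$, to composition in $\Hom_\Lambda\!\left(-,\Sigma\left[\begin{smallmatrix}U\\0\end{smallmatrix}\right]\right)$; this follows from the explicit description of $\Psi$ as $\pi^*\circ\Sigma(-)\circ\Theta$ in the proof of Lemma~\ref{Hom iso2} together with the naturality of $\pi^*$. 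One must also check that the right $S^{\op}$-action on $\Hom_R(V,U)$ corresponds, via the $(1,2)$- and $(2,2)$-components, to composition $\Hom_\Lambda\!\left(W,\Sigma\left[\begin{smallmatrix}U\\0\end{smallmatrix}\right]\right)\otimes\Hom_\Lambda(W,W)\to\Hom_\Lambda\!\left(W,\Sigma\left[\begin{smallmatrix}U\\0\end{smallmatrix}\right]\right)$; this last verification uses the explicit formula for the component $l_{\tilde{s}}$ of $\alpha(\tilde{s})$ on the $\Sigma\left[\begin{smallmatrix}V\\0\end{smallmatrix}\right]$-summand of $W$ together with the formula for $\Psi(\theta)$, and it is here that the Koszul signs must be tracked carefully. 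Every product landing in the $(2,1)$ corner is automatically consistent with the zero map because $\Gamma$ is upper triangular. Once multiplicativity is established, $\Phi$ is a quasi-isomorphism of DGAs, whence $D(\Gamma^{\op})\simeq D(\mathscr{E}^{\op})$.

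Finally I would identify $\Gamma^{\op}$ with $\tilde{\Lambda}$. For any upper triangular matrix DGA $\begin{bmatrix}A&N\\0&B\end{bmatrix}$ the opposite DGA is isomorphic to $\begin{bmatrix}B^{\op}&N\\0&A^{\op}\end{bmatrix}$, via $\begin{bmatrix}a&n\\0&b\end{bmatrix}\mapsto\begin{bmatrix}b&n\\0&a\end{bmatrix}$, where $N$ is given the left $B^{\op}$- and right $A^{\op}$-module structures opposite to its original right $B$- and left $A$-module structures; this is a direct check of the multiplication rules, including the Koszul signs in the module actions. Applying this to $\Gamma$ yields
$$\Gamma^{\op}\cong\begin{bmatrix}(S^{\op})^{\op}&\Hom_R(V,U)\\0&\Hom_R(U,U)^{\op}\end{bmatrix}=\begin{bmatrix}S&\Hom_R(V,U)\\0&\Hom_R(U,U)^{\op}\end{bmatrix}=\tilde{\Lambda},$$
and one checks that the $S$-$\Hom_R(U,U)^{\op}$-bimodule structure on $\Hom_R(V,U)$ produced by this identification is precisely the one recorded in the Remark preceding the theorem. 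Combining the equivalences gives $D(\Lambda)\simeq D(\mathscr{E}^{\op})\simeq D(\Gamma^{\op})=D(\tilde{\Lambda})$, as required.
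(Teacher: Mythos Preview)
Your proposal is correct and follows essentially the same route as the paper: set $\Gamma=\tilde{\Lambda}^{\op}$, build $\Phi:\Gamma\to\mathscr{E}$ componentwise from $\mathrm{id}$, $\Psi$, $\alpha$, and $0$, invoke Lemma~\ref{Exactness} for the $(2,1)$ corner, and then verify multiplicativity. The one point where the paper is more explicit is the $(1,2)$ component: to make $\Phi$ strictly multiplicative the paper takes $\theta\mapsto(-1)^{|\theta|}\Psi(\theta)$ rather than $\Psi$ itself, and then carries out the sign computation you flag as the ``main obstacle''; you should expect to need exactly this twist when you do the check.
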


\begin{proof}
From Theorem \ref{1st attempt} and lemma \ref{structure of E} we have that $D(\Lambda)\simeq D(\mathscr{E}^{\op})$ where $$\mathscr{E}=\begin{bmatrix}\Hom_R(U,U)&\Hom_\Lambda(W,\Sigma\left[\begin{smallmatrix}U\\0\end{smallmatrix}\right])\\\Hom_R(\Sigma \left[\begin{smallmatrix}U\\0\end{smallmatrix}\right],W)&\Hom_\Lambda(W,W)\end{bmatrix}.$$ 
\smallskip

We therefore only need to show that there is a quasi-isomorphism of DGA's from 
$\tilde{\Lambda}^{\op}=\begin{bmatrix}\Hom_R(U,U)&\Hom_R(V,U)\\0&S^{\op}\end{bmatrix}$ to $\mathscr{E}$.

\medskip

From proposition \ref{quasi S^op} we have that there exists a quasi-isomorphism $\alpha:S^{\op}\rightarrow\Hom_\Lambda(W,W)$. 
Hence we can define the map 
$$\Phi:\begin{bmatrix}\Hom_R(U,U)&\Hom_R(V,U)\\0&S^{\op}\end{bmatrix}
\rightarrow
\begin{bmatrix}\Hom_R(U,U)&\Hom_\Lambda(W,\Sigma\left[\begin{smallmatrix}U\\0\end{smallmatrix}\right])\\\Hom_R(\Sigma \left[\begin{smallmatrix}U\\0\end{smallmatrix}\right],W)&\Hom_\Lambda(W,W)\end{bmatrix}$$
by $\Phi\left(\begin{bmatrix}\phi&\theta\\0&s\end{bmatrix}\right)=\left(\begin{bmatrix}\phi&(-1)^{|\theta|}\Psi(\theta)\\0&\alpha(s)\end{bmatrix}\right)$.

Here $\Psi:\Hom_R(V,U)\rightarrow\Hom_\Lambda\!\left(W,\Sigma\begin{bmatrix}U\\0\end{bmatrix}\right)$ is the quasi-isomorphism from Lemma \ref{Hom iso2}.

We now need to show that $\Phi$ is a morphism of DGA's.

\smallskip

Both addition and compatibility with the differential follow from the fact that $\alpha$ is a morphism of diffential graded algebras. So we only need to check multiplication:

Let . denote multiplication in $S^{\op}$. Let $\begin{bmatrix}\phi&\theta\\0&s\end{bmatrix}\in \tilde{\Lambda}_i$ and $\begin{bmatrix}\phi'&\theta'\\0&s'\end{bmatrix}\in \tilde{\Lambda}_j$; then we have
$$\Phi\left(\begin{bmatrix}\phi&\theta\\0&s\end{bmatrix}\right)\Phi\left(\begin{bmatrix}\phi'&\theta'\\0&s'\end{bmatrix}\right)
=\begin{bmatrix}\phi&(-1)^i\Psi(\theta)\\0&\alpha(s)\end{bmatrix}\begin{bmatrix}\phi'&(-1)^j\Psi(\theta')\\0&\alpha(s')\end{bmatrix}$$
$$=\begin{bmatrix}\phi\phi'&(-1)^j\phi\Psi(\theta')+(-1)^i\Psi(\theta)\alpha(s')\\0&\alpha(s)\alpha(s')\end{bmatrix}$$
$$=\begin{bmatrix}\phi\phi'&(-1)^j\phi\Psi(\theta')+(-1)^i\Psi(\theta)\alpha(s')\\0&\alpha(s.s')\end{bmatrix}$$
and
$$\Phi\left(\begin{bmatrix}\phi&\theta\\0&s\end{bmatrix}\begin{bmatrix}\phi'&\theta'\\0&s'\end{bmatrix}\right)
=\Phi\left(\begin{bmatrix}\phi\phi'&\phi\theta+\theta.s'\\0&s.s'\end{bmatrix}\right)$$
$$=\begin{bmatrix}\phi\phi'&(-1)^{(i+j)}\Psi(\phi\theta'+(-1)^{ij}s'\theta)\\0&\alpha(s.s')\end{bmatrix}.$$

However
$$(-1)^{(i+j)}\Psi((\phi\theta'+(-1)^{ij}s'\theta))\left(\begin{bmatrix}\left[\begin{smallmatrix}m\\s\end{smallmatrix}\right]\\\left[\begin{smallmatrix}v\\0\end{smallmatrix}\right]\end{bmatrix}\right)$$
$$=(-1)^{(i+j)}\Psi(\phi\theta')\left(\begin{bmatrix}\left[\begin{smallmatrix}m\\s\end{smallmatrix}\right]\\\left[\begin{smallmatrix}v\\0\end{smallmatrix}\right]\end{bmatrix}\right)+(-1)^{(i+j)}(-1)^{ij}\Psi(s'\theta)\left(\begin{bmatrix}\left[\begin{smallmatrix}m\\s\end{smallmatrix}\right]\\\left[\begin{smallmatrix}v\\0\end{smallmatrix}\right]\end{bmatrix}\right)$$
$$=(-1)^{(i+j)}(-1)^{(i+j)}\begin{bmatrix}\phi\theta'(v)\\0\end{bmatrix}+(-1)^{(i+j)}(-1)^{ij}(-1)^{(i+j)}\left(\begin{bmatrix}(s'\theta)(v)\\0\end{bmatrix}\right)$$
$$=\phi\begin{bmatrix}\theta'(v)\\0\end{bmatrix}+(-1)^{ij}(-1)^{j(i+(i+1))}\begin{bmatrix}\theta(vs')\\0\end{bmatrix}$$
$$=\phi\begin{bmatrix}\theta'(v)\\0\end{bmatrix}+(-1)^{j(i+1)}\begin{bmatrix}\theta(vs')\\0\end{bmatrix}$$
$$=(-1)^j\phi\Psi(\theta')\left(\begin{bmatrix}\left[\begin{smallmatrix}m\\s\end{smallmatrix}\right]\\\left[\begin{smallmatrix}v\\0\end{smallmatrix}\right]\end{bmatrix}\right)+(-1)^{j(i+1)}(-1)^i\Psi(\theta)\left(\begin{bmatrix}\left[\begin{smallmatrix}ms'\\ss'\end{smallmatrix}\right]\\\left[\begin{smallmatrix}vs'\\0\end{smallmatrix}\right]\end{bmatrix}\right)$$
$$=((-1)^j\phi\Psi(\theta')+(-1)^i\Psi(\theta)\alpha(s'))\left(\begin{bmatrix}\left[\begin{smallmatrix}m\\s\end{smallmatrix}\right]\\\left[\begin{smallmatrix}v\\0\end{smallmatrix}\right]\end{bmatrix}\right),$$

\smallskip

so $(-1)^j\phi\Psi(\theta')+(-1)^i\Psi(\theta)\alpha(s')=(-1)^{(i+j)}\Psi((\phi\theta'+(-1)^{ij}s'\theta))$ and therefore $$\Phi\left(\begin{bmatrix}\phi&\theta\\0&s\end{bmatrix}\right)\Phi\left(\begin{bmatrix}\phi'&\theta'\\0&s'\end{bmatrix}\right)=\Phi\left(\begin{bmatrix}\phi&\theta\\0&s\end{bmatrix}\begin{bmatrix}\phi'&\theta'\\0&s'\end{bmatrix}\right).$$

\medskip

We therefore have that $\Phi$ is a morphism of Diffential Graded Algebras. Furthermore since from lemma \ref{Exactness} we have that $\Hom_\Lambda\!\left(\Sigma\begin{bmatrix}U\\0\end{bmatrix},W\right)$ is exact and so the map $0\rightarrow\Hom_\Lambda\!\left(\Sigma\begin{bmatrix}U\\0\end{bmatrix},W\right)$ is a quasi isomorphism. 

Taking these together with the fact that $\alpha:S^{\op}\rightarrow\Hom_\Lambda(W,W)$ is a quasi-isomorphism we have that $\Phi$ is a quasi-isomorphism.

Hence $\mathscr{E}\simeq\tilde{\Lambda}^{\op}$ and so $$D(\Lambda)\simeq D(\mathscr{E}^{\op})\simeq D(\tilde{\Lambda}).$$

\end{proof}

\section{Examples}

We shall now conclude with some examples. In the first example we will show that by taking $R$ and $S$ to be $k$-algebras and making the same assumptions as in \cite{Lad}, we obtain what is in essence the same result.

\begin{Definition}
An $R$-module $X$ is called rigid if $\Ext_R^i(X,X)=0$ for all $i\neq 0$.
\end{Definition}

\medskip

\begin{Theorem}
Let $R$ and $S$ be rings and $\leftidx{_R}{M}{_S}$ a $R$-$S$-bimodule such that $\leftidx{_R}{M}$ is compact in $D(R)$ and when $R$ and $S$ are considered as DGAs then $\leftidx{_R}{M}{_S}$ as a DG-bimodule is quasi-isomorphic to $\leftidx{_R}{V}{_S}$ which is a K-projective DG-$R$-module. Let $\leftidx{_R}{X}$ be a compact and rigid $R$-module with $\langle X\rangle=D(R)$ and $\Ext_R^n(\leftidx{_R}{M},\leftidx{_R}{X})=0$ for all $n\neq 0$. Then the triangular matrix rings 
$$\Lambda=\begin{bmatrix}R&M\\0&S\end{bmatrix} \textrm{ and } \tilde{\Lambda}=\begin{bmatrix}S&\Hom_R(M,X)\\0&\End_R(X)^{\op}\end{bmatrix}$$
are derived equivalent.  
\end{Theorem}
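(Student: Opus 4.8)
The plan is to deduce the statement from Theorem \ref{Main} together with the standard fact that a quasi-isomorphism of DGAs induces a triangle equivalence of derived categories. The hypotheses of Theorem \ref{Main} are all in force here: $X$ is compact in $D(R)$ with $\langle X\rangle=D(R)$, $M$ is compact in $D(R)$, and, viewing $R$ and $S$ as DGAs, $\leftidx{_R}{M}{_S}$ is quasi-isomorphic to a K-projective $\leftidx{_R}{V}{_S}$, which is exactly the standing assumption of Definition \ref{UT DG def}. Thus Theorem \ref{Main} gives $D(\Lambda)\simeq D(\tilde\Lambda_{\mathrm{dg}})$, where $\tilde\Lambda_{\mathrm{dg}}=\begin{bmatrix}S&\Hom_R(V,U)\\0&\Hom_R(U,U)^{\op}\end{bmatrix}$ and $U,V$ are K-projective resolutions of $X$ and $M$ over $R$. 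It therefore suffices to exhibit a quasi-isomorphism of DGAs relating $\tilde\Lambda_{\mathrm{dg}}$ to the triangular matrix ring $\tilde\Lambda=\begin{bmatrix}S&\Hom_R(M,X)\\0&\End_R(X)^{\op}\end{bmatrix}$.

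The point is that the rigidity of $X$ and the vanishing hypothesis on $M$ make $\tilde\Lambda_{\mathrm{dg}}$ formal. Since $U$ and $V$ are K-projective resolutions, $\Hom_R(U,U)$ and $\Hom_R(V,U)$ represent $\RHom_R(X,X)$ and $\RHom_R(M,X)$, so their cohomologies are $\Ext^*_R(X,X)$ and $\Ext^*_R(M,X)$. By rigidity of $X$ and the assumption $\Ext^n_R(M,X)=0$ for $n\neq 0$, both are concentrated in degree $0$, where they are $\End_R(X)$ and $\Hom_R(M,X)$. Because $S$ is a ring, $\H^i(\tilde\Lambda_{\mathrm{dg}})=0$ for $i\neq 0$, and comparing induced products shows $\H^0(\tilde\Lambda_{\mathrm{dg}})\cong\tilde\Lambda$ as rings: on the lower-right corner the induced product is the product of $\End_R(X)^{\op}$, the left $\Hom_R(U,U)$-action on $\Hom_R(V,U)$ becomes composition $\End_R(X)\times\Hom_R(M,X)\to\Hom_R(M,X)$, and the left $S$-action becomes the usual left $S$-module structure on $\Hom_R(M,X)$ induced by the right $S$-module structure on $M$.

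To turn this into an actual quasi-isomorphism of DGAs I would invoke the good truncation. The subcomplex $\tau_{\le 0}\tilde\Lambda_{\mathrm{dg}}$ --- namely $\tilde\Lambda_{\mathrm{dg}}$ in negative degrees, the cocycles in degree $0$, and $0$ in positive degrees --- is a sub-DGA, and quotienting out the degree-$0$ coboundaries yields a DGA mapping onto $\H^0(\tilde\Lambda_{\mathrm{dg}})=\tilde\Lambda$. This gives a zig-zag of DGA homomorphisms $\tilde\Lambda_{\mathrm{dg}}\hookleftarrow\tau_{\le 0}\tilde\Lambda_{\mathrm{dg}}\twoheadrightarrow\tilde\Lambda$, in which the first map is a quasi-isomorphism because $\H^i(\tilde\Lambda_{\mathrm{dg}})=0$ for $i>0$ and the second because $\H^i(\tilde\Lambda_{\mathrm{dg}})=0$ for $i<0$. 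Each map induces a derived equivalence, so $D(\tilde\Lambda_{\mathrm{dg}})\simeq D(\tau_{\le 0}\tilde\Lambda_{\mathrm{dg}})\simeq D(\tilde\Lambda)$, and chaining with Theorem \ref{Main} gives $D(\Lambda)\simeq D(\tilde\Lambda)$, as desired.

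I do not expect a substantive obstacle beyond Theorem \ref{Main} itself; what needs care is routine: checking by a short degree count that $\tau_{\le 0}$ respects the triangular matrix DGA structure (the diagonal $S$-entry sits in degree $0$ and the off-diagonal entry carries the bimodule action), noting that $\tau_{\le 0}$ and $\H^0$ commute with passage to the opposite DGA in the lower-right corner, and keeping track of Koszul signs in the identification $\H^0(\tilde\Lambda_{\mathrm{dg}})\cong\tilde\Lambda$.
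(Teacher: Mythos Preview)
Your proposal is correct and follows essentially the same route as the paper: apply Theorem \ref{Main}, then use rigidity of $X$ and the vanishing of $\Ext_R^n(M,X)$ to see that the DGA $\tilde\Lambda_{\mathrm{dg}}$ has cohomology concentrated in degree $0$ and equal to the ring $\tilde\Lambda$. The only difference is that you spell out the standard truncation zig-zag $\tilde\Lambda_{\mathrm{dg}}\hookleftarrow\tau_{\le 0}\tilde\Lambda_{\mathrm{dg}}\twoheadrightarrow\H^0(\tilde\Lambda_{\mathrm{dg}})$ explicitly, whereas the paper simply asserts that a DGA with cohomology concentrated in degree $0$ is derived equivalent to its $\H^0$.
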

\begin{proof}
By considering the rings $R$ and $S$ and modules $M$ and $X$ to be DGA's and DG-modules respectively we can apply Theorem \ref{Main} to get that the DGA's 
$$\begin{bmatrix}R&M\\0&S\end{bmatrix}\textrm{ and } \begin{bmatrix}S&\Hom_R(V,U)\\0&\Hom_R(U,U)^{\op}\end{bmatrix}$$
are derived equivalent, where $U$ is a K-projective resolution of $X$.

\medskip

Since $\Hom_R(U,U)=\RHom_R(X,X)$ we have that 
$$H^i\Hom_R(U,U)=H^i\RHom_R(X,X)=\Ext_R^i(X,X)=0$$
for all $i\neq 0$ since $X$ is rigid and 
$$H^0\Hom_R(U,U)=H^0\RHom_R(X,X)=\End_R(X).$$ 
Similarly since $\Hom_R(V,U)=\RHom_R(M,X)$ we have that
$$H^i\Hom_R(V,U)=H^i\RHom_R(M,X)=\Ext_R^i(M,X)=0$$
for all $i\neq 0$ and 
$$H^0\Hom_R(V,U)=H^0\RHom_R(M,X)=\Hom_R(M,X).$$

Hence we have that $H^i\begin{bmatrix}S&\Hom_R(V,U)\\0&\Hom_R(U,U)^{\op}\end{bmatrix}=0$ for all $i\neq 0$ and $$H^0\begin{bmatrix}S&\Hom_R(V,U)\\0&\Hom_R(U,U)^{\op}\end{bmatrix}=\begin{bmatrix}S&\Hom_R(M,X)\\0&\End_R(X)^{\op}\end{bmatrix}.$$

We therefore have that the matrix ring $\begin{bmatrix}S&\Hom_R(M,X)\\0&\End_R(X)^{\op}\end{bmatrix}$ is derived equivalent to the DGA $\begin{bmatrix}S&\Hom_R(V,U)\\0&\Hom_R(U,U)^{\op}\end{bmatrix}$ and so derived equivalent to the matrix ring $\begin{bmatrix}R&M\\0&S\end{bmatrix}$.

\end{proof} 

\bigskip

Our next example considers the special case obtained when we take $\leftidx{_R}{X}=\leftidx{_R}{R}$.

\begin{Corollary}
\label{X=R}
Let $\leftidx{_R}{M}{_S}$ be compact as a DG-$R$-module. Then the triangular matrix DGAs 
$$\Lambda=\begin{bmatrix}R&M\\0&S\end{bmatrix} \textrm{ and } \tilde{\Lambda}=\begin{bmatrix}S&\Hom_R(V,R)\\0&R\end{bmatrix}$$
where $V$ is K-projective over $R$ and is quasi-isomorphic to $\leftidx{_R}{M}{_S}$, are derived equivalent.
\end{Corollary}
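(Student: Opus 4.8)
The plan is to obtain Corollary \ref{X=R} as the special case $X={}_RR$ of Theorem \ref{Main}. The first step is to check that ${}_RR$ satisfies the hypotheses imposed on $X$ there. Compactness is clear, since $\Hom_{D(R)}({}_RR,-)\cong\H^0(-)$ commutes with set-indexed coproducts; and $\langle{}_RR\rangle=D(R)$ because $R$ is a compact generator of its own derived category, every DG-$R$-module being built from shifts and coproducts of ${}_RR$ by distinguished triangles. The remaining hypothesis, that ${}_RM_S$ be compact over $R$, is part of the assumptions of the corollary, and Definition \ref{UT DG def} already provides the K-projective resolution ${}_RV_S\to{}_RM_S$ with $V$ K-projective over $R$. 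Thus Theorem \ref{Main} applies and gives $D(\Lambda)\simeq D(\tilde{\Lambda})$ with $\tilde{\Lambda}=\begin{bmatrix}S&\Hom_R(V,U)\\0&\Hom_R(U,U)^{\op}\end{bmatrix}$ for any K-projective resolution $U$ of ${}_RR$.

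The second step is to identify the entries of $\tilde{\Lambda}$ in this case. Since ${}_RR$ is itself K-projective, we are free to take $U={}_RR$, so that $\Hom_R(V,U)=\Hom_R(V,R)$ and $\Hom_R(U,U)=\End_R({}_RR)$. Every DG-endomorphism of ${}_RR$ is determined by its value on $1\in R$, and the assignment sending an element of $R$ to right multiplication by it (with the Koszul sign, exactly as in the identification $\End_\Lambda(C)\cong S^{\op}$ in Section 3) is an isomorphism of DGAs $R^{\op}\xrightarrow{\sim}\End_R({}_RR)$. Hence $\Hom_R(U,U)^{\op}\cong(R^{\op})^{\op}=R$, and $\tilde{\Lambda}$ becomes precisely $\begin{bmatrix}S&\Hom_R(V,R)\\0&R\end{bmatrix}$, which completes the proof.

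The argument is essentially a substitution, so there is no real obstacle; the only point requiring a little care is the Koszul-sign bookkeeping in the isomorphism $\End_R({}_RR)^{\op}\cong R$, but this is routine and parallels computations already carried out in Section 3 (for instance in showing $\mathscr{G}^{\op}\cong S$).
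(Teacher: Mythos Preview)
Your proposal is correct and takes exactly the approach the paper intends: the paper states this corollary without proof, presenting it simply as the specialisation $X={}_RR$ of Theorem~\ref{Main}, and you have faithfully spelled out that specialisation together with the routine identifications $U=R$ and $\End_R({}_RR)^{\op}\cong R$. There is nothing to add.
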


\bigskip

For the next example we require the idea of the duality on $\Mod R$ which we define next.

\begin{Definition}
Let $R$ be a finite dimensional DGA over a field $k$. Then we can define the duality on $\Mod R$ by $D:\Mod R\rightarrow\Mod R^{\op}$ where $D(-)=\Hom_k(-,k)$.
\end{Definition}

\bigskip

The final example below considers the case where the DGA's $R$ and $S$ are over some field $k$ and $R$ is self dual in the sense of the above definition.

\begin{Theorem}
Let $R$ be a finite dimensional and self dual in the sense that $DR\cong R$ in the derived category of DG-bi-$R$-modules and let $\leftidx{_R}{M}{_S}$ be compact as a DG-$R$-module. Then 
$$\Lambda=\begin{bmatrix}R&M\\0&S\end{bmatrix} \textrm{ and } \tilde{\Lambda}=\begin{bmatrix}S&DM\\0&R\end{bmatrix}$$
are derived equivalent.
\end{Theorem}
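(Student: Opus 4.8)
The plan is to specialise Corollary \ref{X=R} to the case ${}_RX={}_RR$ and then to recognise the off-diagonal bimodule of the resulting $\tilde\Lambda$ as $DM$ by means of the self-duality of $R$.

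First I would invoke Corollary \ref{X=R} directly: taking $U=R$ as a K-projective resolution of the (already K-projective) module ${}_RR$, and recalling that $\End_R({}_RR)\cong R^{\op}$ as DGAs (an $R$-linear endomorphism of ${}_RR$ is right multiplication by its value on $1$, and composition reverses order, up to Koszul signs), so that $\Hom_R(U,U)^{\op}\cong R$, Corollary \ref{X=R} yields a derived equivalence $D(\Lambda)\simeq D(\tilde\Lambda_0)$ with
$$\tilde\Lambda_0=\begin{bmatrix}S&\Hom_R(V,R)\\0&R\end{bmatrix},$$
where $V$ is a K-projective resolution of ${}_RM_S$. Since a quasi-isomorphism of DGAs induces a triangle equivalence of derived categories, and since a quasi-isomorphism $N\to N'$ of DG-$S$-$R$-bimodules induces a quasi-isomorphism of DGAs $\left[\begin{smallmatrix}S&N\\0&R\end{smallmatrix}\right]\to\left[\begin{smallmatrix}S&N'\\0&R\end{smallmatrix}\right]$ (the homology of a triangular matrix DGA is computed entrywise), it suffices to connect the DG-$S$-$R$-bimodules $\Hom_R(V,R)$ and $DM$ by a zigzag of bimodule quasi-isomorphisms.

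This identification I would carry out in three steps. Because $V$ is K-projective over $R$, the functor $\Hom_R(V,-)$ preserves quasi-isomorphisms of DG-left-$R$-modules; applying it to a zigzag of DG-$R$-$R$-bimodule quasi-isomorphisms realising the self-duality $R\simeq DR$ (which exists by hypothesis, interpreting the isomorphism in the derived category of DG-bi-$R$-modules as a roof) produces a zigzag of DG-$S$-$R$-bimodule quasi-isomorphisms between $\Hom_R(V,R)$ and $\Hom_R(V,DR)$. Next, the tensor--hom adjunction against the $k$-dual gives a natural isomorphism $\Hom_R(V,\Hom_k(R,k))\cong\Hom_k(R\otimes_RV,k)\cong\Hom_k(V,k)=DV$ of DG-$S$-$R$-bimodules. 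Finally, the quasi-isomorphism ${}_RV_S\xrightarrow{f}{}_RM_S$ of Definition \ref{UT DG def} dualises, $k$ being a field, to a quasi-isomorphism of DG-$S$-$R$-bimodules $DM\xrightarrow{\sim}DV$. Composing these yields the desired zigzag, whence $D(\tilde\Lambda_0)\simeq D(\tilde\Lambda)$ and therefore $D(\Lambda)\simeq D(\tilde\Lambda)$.

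The part I expect to require the most care is the bookkeeping of module structures and Koszul signs: one must check that every identification above respects the $S$-$R$-bimodule structures, where the left $S$-action throughout is induced by the right $S$-action on $M$ (equivalently on $V$) and the right $R$-action is induced by the right $R$-action on the target module — on $\Hom_R(V,R)$ this right action is post-composition by right multiplications, which is precisely the right $\End_R({}_RR)^{\op}\cong R$-action appearing in $\tilde\Lambda_0$, and under $R\simeq DR$ it must be carried to the natural right $R$-action $(\xi\cdot r)(m)=\pm\xi(rm)$ on $DM=\Hom_k(M,k)$. A related point is to realise the derived-category self-duality isomorphism $R\simeq DR$ by an honest zigzag of bimodule quasi-isomorphisms so that it descends to the triangular matrix DGAs. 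Finite-dimensionality of $R$ is used only to ensure that $D$ is a genuine duality (so that the hypothesis $DR\cong R$ is meaningful and the dualised modules remain compact); the adjunction $\Hom_R(-,\Hom_k(R,k))\cong\Hom_k(-,k)$ is purely formal and needs no finiteness.
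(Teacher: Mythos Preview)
Your proposal is correct and follows essentially the same route as the paper: invoke Corollary \ref{X=R}, then identify $\Hom_R(V,R)$ with $DV$ via self-duality and tensor--hom adjunction, and finally use the dualised quasi-isomorphism $DM\xrightarrow{\sim}DV$ to pass to the matrix DGA with $DM$. You are in fact slightly more careful than the paper in treating the derived isomorphism $R\simeq DR$ as a zigzag of bimodule quasi-isomorphisms and in tracking the $S$-$R$-bimodule structures throughout; the paper simply writes $\Hom_R(V,R)\cong\Hom_R(V,DR)$ without comment.
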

\begin{proof}
From corollary \ref{X=R} we have that 
$$\begin{bmatrix}R&M\\0&S\end{bmatrix} \textrm{ and } \begin{bmatrix}S&\Hom_R(V,R)\\0&R\end{bmatrix}$$
are derived equivalent, where $\leftidx{_R}{V}{_S}$ is quasi-isomorphic to $\leftidx{_R}{M}{_S}$ and $\leftidx{_R}{V}$ is K-projective.  

Since $R$ is self dual we have that 
$$\Hom_R(V,R)\cong\Hom_R(V,DR)=\Hom_R(V,\Hom_k(R,k))$$
$$\cong\Hom_k(R\otimes_R V,k)\cong\Hom_k(V,k)=DV.$$

Furthermore, applying the functor $D(-)$ to the quasi-isomorphism $V\rightarrow M$ gives us the quasi-isomorphim $DM\rightarrow DV$. This in turn allows us to define a quasi-isomorphism $\begin{bmatrix}S&DM\\0&R\end{bmatrix}\rightarrow\begin{bmatrix}S&DV\\0&R\end{bmatrix}$, so
$$\begin{bmatrix}S&DM\\0&R\end{bmatrix} \textrm{ and }\begin{bmatrix}S&DV\\0&R\end{bmatrix}$$
are derived equivalent and hence 
$$\begin{bmatrix}R&M\\0&S\end{bmatrix} \textrm{ and }\begin{bmatrix}S&DM\\0&R\end{bmatrix}$$
are derived equivalent.
\end{proof}

\end{document}